\definecolor{newcolor}{rgb}{.8,.349,.1}
\newtheorem{dfn}[algocf]{Definition}
\newtheorem{pro}[algocf]{Problem}
\newtheorem{thm}[algocf]{Theorem}
\newtheorem{lem}[algocf]{Lemma}
\newlist{steps}{enumerate}{1}
\setlist[steps, 1]{label = Step \arabic*:}
\newcommand{\R}{\mathbb{R}}
\newcommand{\Z}{\mathbb{Z}}
\newcommand{\N}{\mathbb{N}}
\newcommand{\ga}{\gamma}
\newcommand{\Ga}{\Gamma}
\newcommand{\de}{\delta}
\newcommand{\ep}{\varepsilon}
\newcommand{\la}{\lambda}
\newcommand{\id}{\mathrm{id}}
\newcommand{\bs}{\hfill $\blacksquare$}
\journal{Computers \& Graphics}
\begin{document}


\begin{frontmatter}

\title{Encoding and Topological Computation on Textile Structures}

\author[1]{Matthew Bright}{\corref{cor1}}
\cortext[cor1]{Corresponding author:}
\emailauthor{M.J.Bright@liverpool.ac.uk}{}
    
\author[2]{Vitaliy Kurlin}

\address[1,2]{Computer Science department and Materials Innovation Factory, University of Liverpool, Liverpool L68 3BX, UK}



\begin{abstract}
A textile structure is a periodic arrangement of threads in the thickened plane.
A topological classification of textile structures is harder than for classical knots and links that are non-periodic and restricted to a bounded region.
The first important problem is to encode all textile structures in a simple combinatorial way. 
This paper extends the notion of the \emph{Gauss code} in classical knot theory, providing a tool for topological computation on these structures. 
As a first application, we present a linear time algorithm for determining whether a code represents a textile in the physical sense. 
This algorithm, along with invariants of textile structures, allowed us for the first time to classify all oriented textile structures woven from a single component up to complexity five. 
\end{abstract}

\end{frontmatter}


\section{Introduction: motivations and the realizability problem}
\label{sec:intro}

We consider a textile structure as a set of continuous curves embedded without intersections in $\R^3$ so that any textile is periodically repeated in two directions, see Definition~\ref{dfn:textile}.
\smallskip

A textile struture can also be defined by an embedding of a finite number of closed curves (homeomorphic to a circle $S^1$) into a \emph{thickened torus} $T^2\times I$ as shown in Fig.~\ref{fig:diagonal_textile}, which is the product space of a 2-dimensional torus $T^2$ and an interval $I$.
Similarly, a classical \emph{knot} can be an embedding of a circle $S^1$ into a thickened plane $\R^2\times I$ or a thickened sphere $S^2\times I$.
\smallskip

A topological equivalence of knots and \emph{links} (embeddings of several disjoint curves) is an ambient \emph{isotopy}, a continuous deformation of the ambient space, see Definition~\ref{dfn:isotopy}.
This concept of an isotopy connects the investigation of textile structures to that of links in orientable thickened surfaces of higher genus, see ~\cite{Grishanov07},~\cite{grishanov2009topological1} and~\cite{grishanov2009topological2} for an exposition of this approach.
\smallskip

Computational tools for classical knot theory are now widely available, see~\cite{linknot}.
Structures in higher genus thickened surfaces have been less well explored. 
To date, there has been a manual enumeration of non-oriented knots with up to 5 crossings in the thickened torus were manually enumerated~\cite{akimova2014classification}.
\smallskip

\begin{figure}[htb]
\begin{center}
\includegraphics[width=\textwidth]{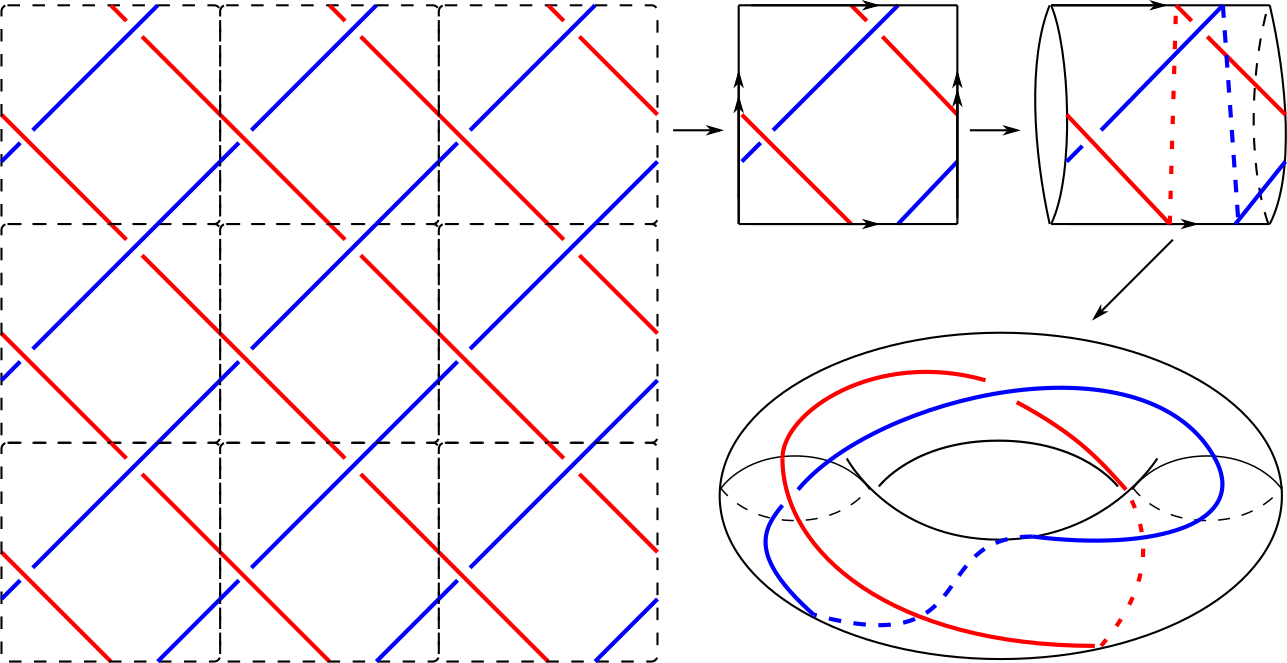}
\caption{A textile structure depicted as periodic crossings in a square which (after gluing opposite sides) can be represented as curves in a thickened torus.} 
\label{fig:diagonal_textile}
\end{center}
\end{figure}

Computation on knotted structures requires an approach to encoding them - for example, \emph{Gauss codes} are 1-dimensional string of symbols that encode all links in $\R^3$, see Definition~\ref{dfn:Gauss_codes}.
\smallskip
 
The last picture in Fig.~\ref{fig:Gauss_codes} shows that not every such code gives rise to a link in $\R^3$. 
\cite{kurlin2008Gauss} has described an algorithm for determining which Gauss codes are realizable by real links. 
This paper solves the following harder problem for textile structures.

\begin{pro}[encoding of textiles and realizability]
\label{pro:realizability} 
Encode any textile structure in such a way that allows an automatic enumeration by efficiently checking whether any potential code is realizable by a link embedded in the thickened torus $T^2\times I$.
\end{pro}

\begin{figure}[htb]
\begin{center}
\includegraphics[width=\textwidth]{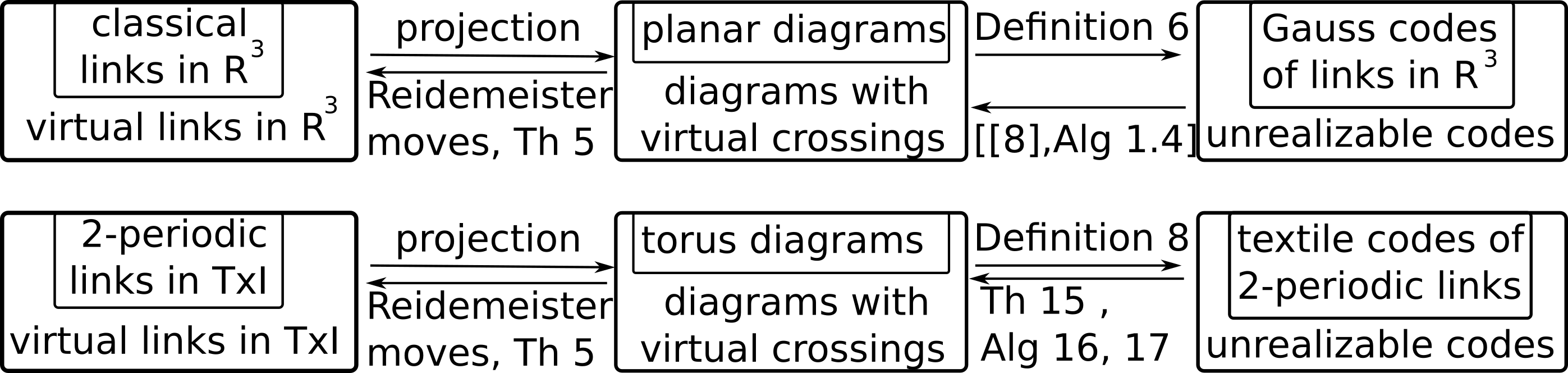}
\caption{Reducing dimensionality without losing information: from links in 3D to 2D diagrams, then to 1D codes, which can be efficiently manipulated.}
\label{fig:importance}
\end{center}
\end{figure}

Other approaches to knotted structures are 3-page embeddings that encode isotopy classes of spatial graphs by central elements of finitely presented semigroups \cite{kurlin2001dynnikov}, \cite{kurlin2004three}, \cite{kurlin2007three}, \cite{kurlin2015linear}.

Here are our contributions to the classification of textiles.
6

\noindent
$\bullet$
Definition~\ref{dfn:textile_code} introduces a new \emph{textile code} of any knot or link embedded in the thickened torus $T^2\times I$.

\noindent
$\bullet$
Theorem~\ref{thm:algorithm} proves that the linear time Algorithm~\ref{alg:periodB} detect realizability of any textile code by a link in $T^2\times I$.  

\noindent
$\bullet$
Tables~\ref{tab:zenkploy22},~\ref{tab:zenkploy2} and~\ref{tab:zenkploy3} obtained by Algorithm~\ref{alg:periodB} enumerate all oriented proper knots $T^2\times I$ up to complexity 5.

\section{Diagrams and Gauss codes of classical and virtual links}
\label{sec:basic}

A \emph{homemorphism} is a continuous bijection whose inverse is also continuous.
An \emph{embedding} is a continuous injective map. We consider a thickened surface $S\times I$, where $I\approx[0,1]$, is an interval.
When $S=\R^2$ or $S=S^2$ is a 2-dimensional sphere, we get classical knots.
When $S=T^2$ is a 2-dimensional torus (a product of circles $S^1\times S^1$), knots can be called \emph{2-periodic}. 
 
\begin{dfn}[knots and links]
\label{dfn:link}
A \emph{link} is an embedding $f:\sqcup_{i=1}^k S_i^1\to S\times I$ of several circles called \emph{components} so that all images $f(S_i^1)\subset S\times I$ are disjoint, i.e. have no (self-) intersections.
A link with a single component is called a \emph{knot}. 
If an orientation is imposed on the embedded circles, then the link itself is called \emph{oriented}, otherwise \emph{unoriented}. 
\bs
\end{dfn}

\begin{dfn}[isotopy]
\label{dfn:isotopy}
An ambient \emph{isotopy} between links $L_0,L_1$ in a thickened surface is a continuous family of homeomorphisms $f_t:S\times I\to S\times I$, $t\in[0,1]$, such that $f_0=\id$ is the identity and the final homemorphism $f_1$ takes $L_0$ to $L_1$. 
\bs
\end{dfn} 

Isotopy is the standard equivalence relation on knots and links. 
Links are depicted via their projections to a surface. 
 
\begin{dfn}
\label{dfn:diagram}
The \emph{diagram} of a link $L\subset S\times I$ in a thickened surface is the image of $L$ under the projection to $S$.
For $S=\R^2$, the diagram is called \emph{planar}. 
In a general position (after a small enough perturbation of $L$), the diagram consists of smooth arcs and transversal intersections called \emph{crossings}, see Fig.~\ref{fig:Gauss_codes}. 
\bs
\end{dfn} 

At each crossing we specify which arc of a link $L$ goes over (through an \emph{overcrossing} of this arc) another arc that passes through an \emph{undercrossing}.
An crossing in a diagram is  represented by a continuous arc (the overcrossing) passing between the ends of two disjoint arcs (the undercrossing). Crossing are \emph{signed} based on the direction of rotation between the undercrossing and overcrossing element, as shown in Fig.~\ref{fig:Gauss_codes}, right

\begin{thm}[Reidemeister \cite{reidemeister1927elementare}]
\label{thm:Reidemeister}
Links $L_0,L_1$ are isotopic in $\R^3$ if only if their planar diagrams can be obtained from each other by an isotopy of diagrams in $\R^2$ and finitely many Reidemeister moves in Fig.~\ref{fig:Reidemeister} (and all their symmetric images).
\end{thm}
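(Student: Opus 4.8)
The plan is to prove the two implications separately, with the forward (``if'') direction being routine and the converse carrying essentially all of the work.

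For the ``if'' direction, observe that each of the three Reidemeister moves alters a diagram only inside a small disk $D\subset\R^2$, and a planar isotopy of a diagram is supported away from the crossings. So it suffices to exhibit, for each move, an explicit ambient isotopy $f_t:\R^3\to\R^3$ supported in $D\times\R$ that carries the ``before'' local tangle to the ``after'' local tangle and is the identity outside; and to note that any isotopy of the plane lifts to a level-preserving ambient isotopy of $\R^3=\R^2\times\R$. Composing finitely many such local isotopies shows that any two diagrams related by planar isotopy and Reidemeister moves are projections of isotopic links.

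For the converse I would pass to the piecewise-linear (equivalently smooth) category: a topological link admitting a diagram is tame, its isotopy class is represented by a PL link, and a topological ambient isotopy between PL links may be replaced by a PL one. A PL ambient isotopy is a finite composition of \emph{triangle moves}, each replacing one edge of a triangle $\Delta\subset\R^3$ by the other two edges while fixing the rest of the link. Barycentrically subdividing $\Delta$, I may assume every triangle is so small that its image under the projection $\pi:\R^3\to\R^2$ meets the projected rest of the link in one of only three controlled ways: $\pi(\Delta)$ is disjoint from the rest of the diagram; or $\pi(\Delta)$ meets exactly one transverse arc; or $\pi(\Delta)$ contains exactly one crossing. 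I would then enumerate the effect on the diagram: over an empty region the move is a planar isotopy, except when the moving edge folds back across the projection of an adjacent edge of the same component, which is a first Reidemeister move; one transverse arc gives a second Reidemeister move; one enclosed crossing gives a third. Equivalently and more conceptually, a generic path $t\mapsto\pi\circ f_t$ in the space of diagrams crosses the discriminant of non-generic projections transversally at finitely many times, and the codimension-one strata of that discriminant — a single tangency of two arcs, a single triple point, a single cusp of the projected curve — correspond exactly to the moves $R2$, $R3$, $R1$ of Fig.~\ref{fig:Reidemeister}; a stratum at which the over/under datum at a crossing flips is never crossed, since that would force two strands of the embedded link to pass through each other. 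Between consecutive crossing times the combinatorial type of the diagram is constant up to planar isotopy, so chaining these finitely many changes connects the diagram of $L_0$ to that of $L_1$.

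The main obstacle is precisely this general-position step: making rigorous that the PL (or smooth) ambient isotopy can be perturbed so that its projection is generic except at finitely many ``model'' moments, and that each such moment realizes exactly one Reidemeister move — this is the subdivision/transversality lemma at the heart of the argument. Everything else, namely the explicit local ambient isotopies of the ``if'' direction and the bookkeeping of composing finitely many moves, is routine once that lemma is in place.
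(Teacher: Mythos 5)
The paper does not prove this statement at all: it is Reidemeister's classical theorem, quoted with a citation to the original 1927 paper and used as a black box, so there is no in-paper argument to compare yours against. Judged on its own, your outline is the standard and correct route — local ambient isotopies supported in a cylinder $D\times\R$ for the easy direction, and for the converse the reduction to PL links, decomposition of a PL ambient isotopy into triangle moves, subdivision until each triangle's projection interacts with the rest of the diagram in one of the model ways, and the identification of those model interactions (or, in the smooth picture, the codimension-one strata of non-generic projections: tangency, triple point, cusp) with moves RII, RIII, RI. Your observation that the stratum where the over/under information would flip cannot be crossed by an ambient isotopy of embeddings is the right reason the moves suffice. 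The one substantive gap is the one you name yourself: the subdivision/general-position lemma — that after perturbation each small triangle move (or each discriminant crossing) produces exactly one elementary diagram change and nothing else — is asserted rather than proved, and your three-case enumeration for a projected triangle (empty, one arc, one crossing) is slightly optimistic as stated, since a single triangle can a priori meet several arcs or an arc plus a vertex of the link, and one must argue that further subdivision and a small perturbation of the projection direction reduce to the model cases. Filling that in is exactly the content of Reidemeister's (and Alexander--Briggs') proof; as a blind reconstruction of a cited classical result, your sketch is faithful to it but stops short of the technical core.
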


\begin{figure}[htb]
\begin{center}
\includegraphics[width=\textwidth]{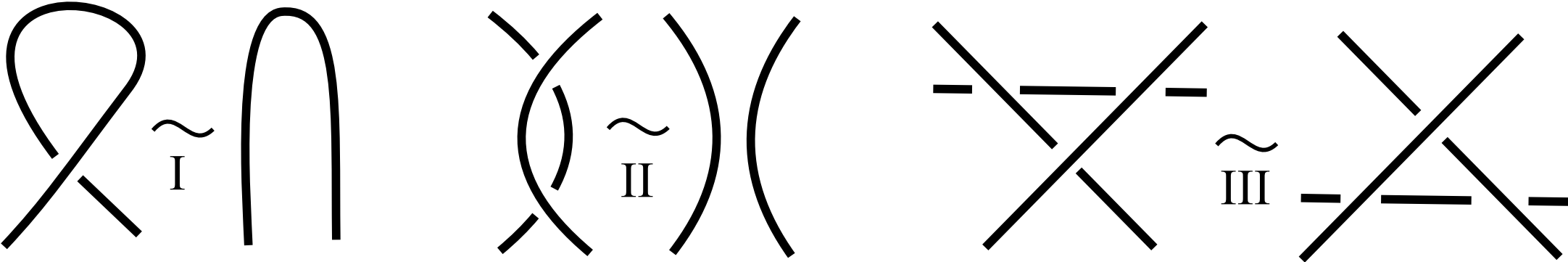}
\caption{Reidemeister moves generate isotopies of links in $\R^3$, see Theorem~\ref{thm:Reidemeister} RI resolves a self-intersection and RII separates two strands 'lying over each other' in the ambient space - both reduce the number of crossings. In RIII, a strand passes over (or under) a crossing - the number and orientation of crossings do not change.} 
\label{fig:Reidemeister}
\end{center}
\end{figure}


\begin{dfn}[Gauss codes]
\label{dfn:Gauss_codes}
Given an oriented link $L\subset\R^3$ with a planar diagram $D\subset\R^2$, arbitrarily label all double crossings by integers $1,\dots,n$.
For every component of $L$, trace the corresponding curve in the diagram $D$ and write down the indices of crossings one by one, starting from any crossing so that each word is considered up to cyclic permutations.
Every undercrossing symbol has a superscript equal to the sign defined in Fig.~\ref{fig:Gauss_codes}, while overcrossing symbols are unadorned.
\smallskip

An \emph{abstract} Gauss code is a set $W$ of words, comprised of symbols $i$ and $i^{\ep}$, where $i=1,\dots,n$, $\ep\in \{+, -\}$ such that each symbol $i$ appears only once, and for each $i$ exactly one of $\{i^+, i^-\}$ also appears in $W$. 
\bs
\end{dfn}

\newcommand{\h}{19mm}
\begin{figure}[htb]
\begin{center}
\includegraphics[height=\h]{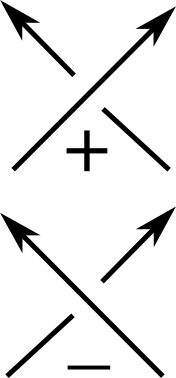}
\hspace*{2mm}
\includegraphics[height=\h]{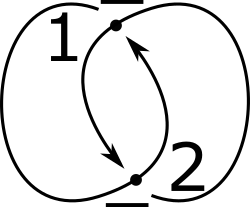}
\hspace*{2mm}
\includegraphics[height=\h]{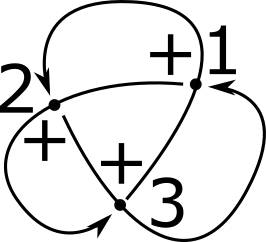}
\hspace*{2mm}
\includegraphics[height=\h]{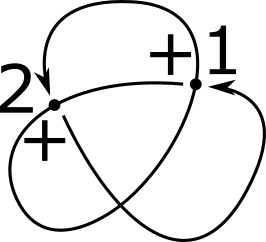}
\caption{\emph{1st}: signs of crossings. 
\emph{2nd}: the diagram of a Hopf link has the Gauss code $\{12^-,1^-2\}$.
\emph{3rd}: the diagram of a trefoil has the Gauss code $1^+ 2 3^+ 1 2^+ 3$.
\emph{4th}: any planar diagram with the code $1^+ 2 1 2^+$ includes a virtual crossing.
} 
\label{fig:Gauss_codes}
\end{center}
\end{figure}

Gauss codes were called paragraphs in~\cite{kurlin2008Gauss} for multi-component links in $\R^3$.
\smallskip 

An abstract Gauss code in the sense of Definition~\ref{dfn:Gauss_codes}, e.g. the code $1^+ 2 1 2^+$, may not represent a diagram of a real link. 
In the last picture of Fig.~\ref{fig:Gauss_codes} we have attempted to draw a diagram by joining the two positive crossings given by the code $1^+ 2 1 2^+$, which forces an extra intersection called a \emph{virtual} crossing without a specified overcrossing or undercrossing arc.
\smallskip

A \emph{virtual} link is a class of planar diagrams with virtual crossings considered up to Reidemeister moves similar to the classical ones in Fig.~\ref{fig:Reidemeister}, where any crossings can be virtual. 
\smallskip

To use Gauss codes to effect computations on links in $\R^3$, we need to work only with realizable codes. Kurlin~\cite{kurlin2008Gauss} has developed a linear time algorithm to determine whether an abstract Gauss code is realizable by a classical link in $\R^3$. 
\smallskip

First, we note an important connection between the theory of virtual links and those embedded in a thickened surface. 
There is a presentation of virtual links developed by Kamada and Saito~\cite{carterkamada}, in which virtual crossings are 'resolved' by embedding the resulting diagram into a surface with an additional $2$-handle (and thus higher genus) as depicted in Fig.~\ref{fig:virttorus}.
\smallskip

\begin{figure}[htb]
\begin{center}
\includegraphics[width=\textwidth]{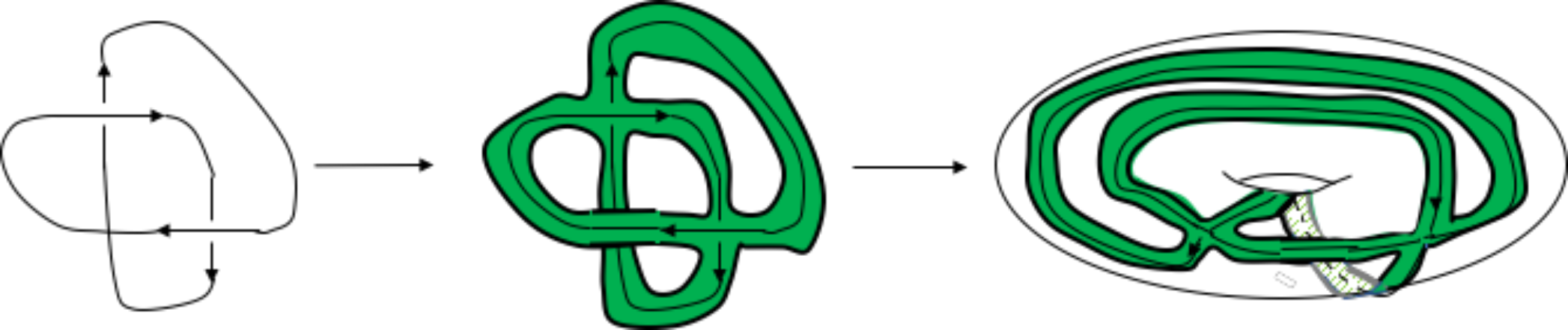}
\caption{The virtual knot with one virtual crossing of Fig.~\ref{fig:Gauss_codes} can be realized by a diagram on the surface of a torus with only classical crossings.}
\label{fig:virttorus}
\end{center}
\end{figure}

Kuperberg~\cite{kuperbergvirt05} has used the above representation to show that links in a thickened surface $S\times I$ uniquely correspond to virtual links, and that if two such links are equivalent as virtual links, then they are equivalent by an ambient isotopy in $S\times I$.
\smallskip

Theorem~\ref{thm:Reidemeister} applies equally to virtual link diagrams - indeed, there are additional 'virtual' moves which may be applied (see for example~\cite{kauffmannvirt}. 
Hence we may apply the Reidemeister moves in Fig.~\ref{fig:Reidemeister} to diagrams representing links in a thickened torus. 

\section{Textile codes of links in a thickened torus}
\label{sec:textile_codes}

This section introduces new textile codes that will allow us to systematically classify links up to ambient isotopy in a thickened torus.
First we clarify a difference between periodic textile structures and links in a fixed thickened torus $T^2\times I$.
\smallskip

After choosing any basis $\vec e_x,\vec e_y$ in $\R^2$, the plane can be mapped to the torus via the covering map $\ga:\R^2\to T^2=S^1\times S^1$ by sending $\vec e_x,\vec e_y$ to the meridian and longitude of the torus $T^2$.
In coordinates, any point $(x,y)\in\R^2$ is mapped to $(\{x\},\{y\})\in S^1\times S^1$.
Here $\{x\}=x-[x]\in[0,1)$ denotes the fractional part of any real coordinate $x\in\R$ and parameterizes the first factor circle $S^1$ of $T^2$, similarly for the coordinate $y$.
\smallskip

Conversely, any fixed torus $T^2$ with a meridian $\mu$ and longitude $\la$ can be obtained as the image $\ga(\R^2)$ whose basis vectors map to $\mu,\la$. 
The preimage $(\ga\times\id_I)^{-1}(L)$ of any link $L\subset T^2\times I$ is an infinite link $L\subset\R^2\times I$ preserved under the translations along the vectors that map to $\mu,\la$ and will be called a \emph{textile} not to confuse it with links in a fixed thickened torus $T^2\times I$. 
\smallskip

A torus $T^2$ can be obtained from a square by identifying its opposite sides with the same orientations, see Fig.~\ref{fig:diagonal_textile}.
 
\begin{dfn}[textiles, proper textiles]
\label{dfn:textile}
A \emph{textile} is an embedding of infinitely many lines or circles $L\subset\R^2\times I$ preserved under translations by two linearly independent vectors in $\R^2$ that form a \emph{basis} of $L$.
An \emph{equivalence} between textiles is any isotopy in $\R^2\times I$, not necessarily preserving their bases in $\R^2$.
A textile $L$ is \emph{proper} if there is no embedding $f: \R\times I \to (\R^2\times I)-L$ such that $f: \R \times \{+1, -1\} \subset \R^2\times \{+1, -1\}$:   see dashed projections of such strips in Fig.~\ref{fig:improper_textiles}. 
For a fixed covering $\ga:\R^2\times I\to T^2\times I$, a \emph{torus diagram} of a textile $L\subset\R^2\times I$ is the projection of $\ga(L)$ on $T^2$, which can be considered as a square with identified opposite sides and usual double crossings. The planar representation of textile components in the diagram is subject to the same restrictions as for knot projections, with the additional \textbf{periodic boundary condition} that arcs intersecting opposite edges of a diagram must coincide.
\bs
\end{dfn}

\begin{figure}[htb]
\begin{center}
\includegraphics[width=\textwidth]{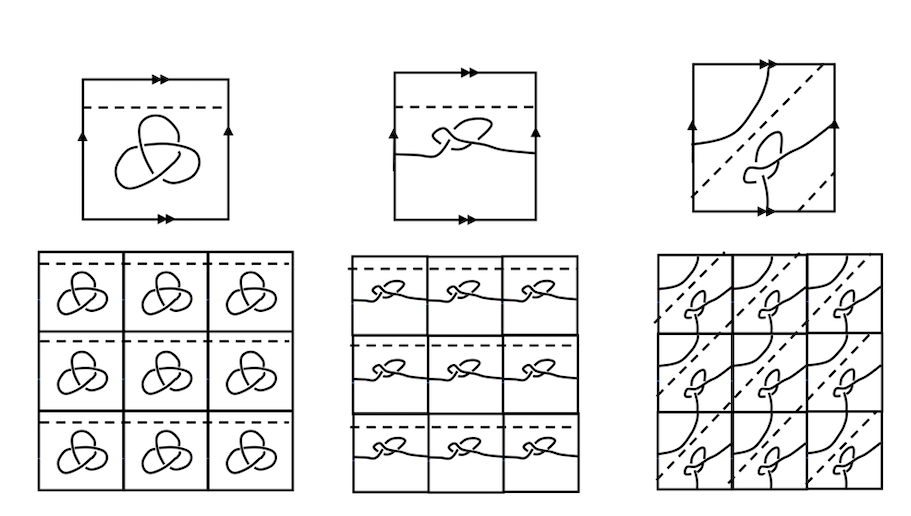}
\caption{Improper textiles have components separated by embedded strips.} 
\label{fig:improper_textiles}
\end{center}
\end{figure}

Textiles in Definition~\ref{dfn:textile} (called doubly periodic links in \cite{Morton})  are considered up to wider equivalences than isotopies in a fixed thickened torus. 
\smallskip

An isotopy between textiles $L_0,L_1$ is more general than an isotopy in a fixed thickened torus $T^2\times I$.
Indeed, $T^2$ can be homeomorphically mapped to itself by Dehn twists along its meridian or longitude, which keeps an isotopic class of a textile in $\R^2\times I$, but changes the isotopy type of a link in $T^2\times I$. 
\smallskip

Definition~\ref{dfn:Gauss_codes} of Gauss codes is extended to torus diagrams.

\begin{dfn}[textile codes]
\label{dfn:textile_code}
Let $L$ be a link in a thickened torus $T^2\times I$ and let $D\subset T^2$ be its torus diagram. 
In addition to labelling crossings by $1,\dots,n$ as in Definition~\ref{dfn:Gauss_codes}, we label the intersections of $D$ with a meridian (represented by the top and bottom edges of the square) with symbols $h_1,\dots, h_l$ from left to right and intersections with a longitude (the left and right edges of the diagram) with symbols $v_1,\dots, v_m$ from bottom to top. 
Then we trace each closed curve of $D$, writing the symbols as they are encountered. 
For an undercrossing $i$, add its sign as a superscript $\pm$. 
For every symbol $h_j, v_k$, add a superscript $\pm$ as in the left picture of Fig.~\ref{fig:signs_hv}.
The resulting collection $W$ of cyclic words is called a \emph{textile code} of the diagram $D$.
\bs
\end{dfn}

\begin{figure}[htb]
\centering
\includegraphics[height=27mm]{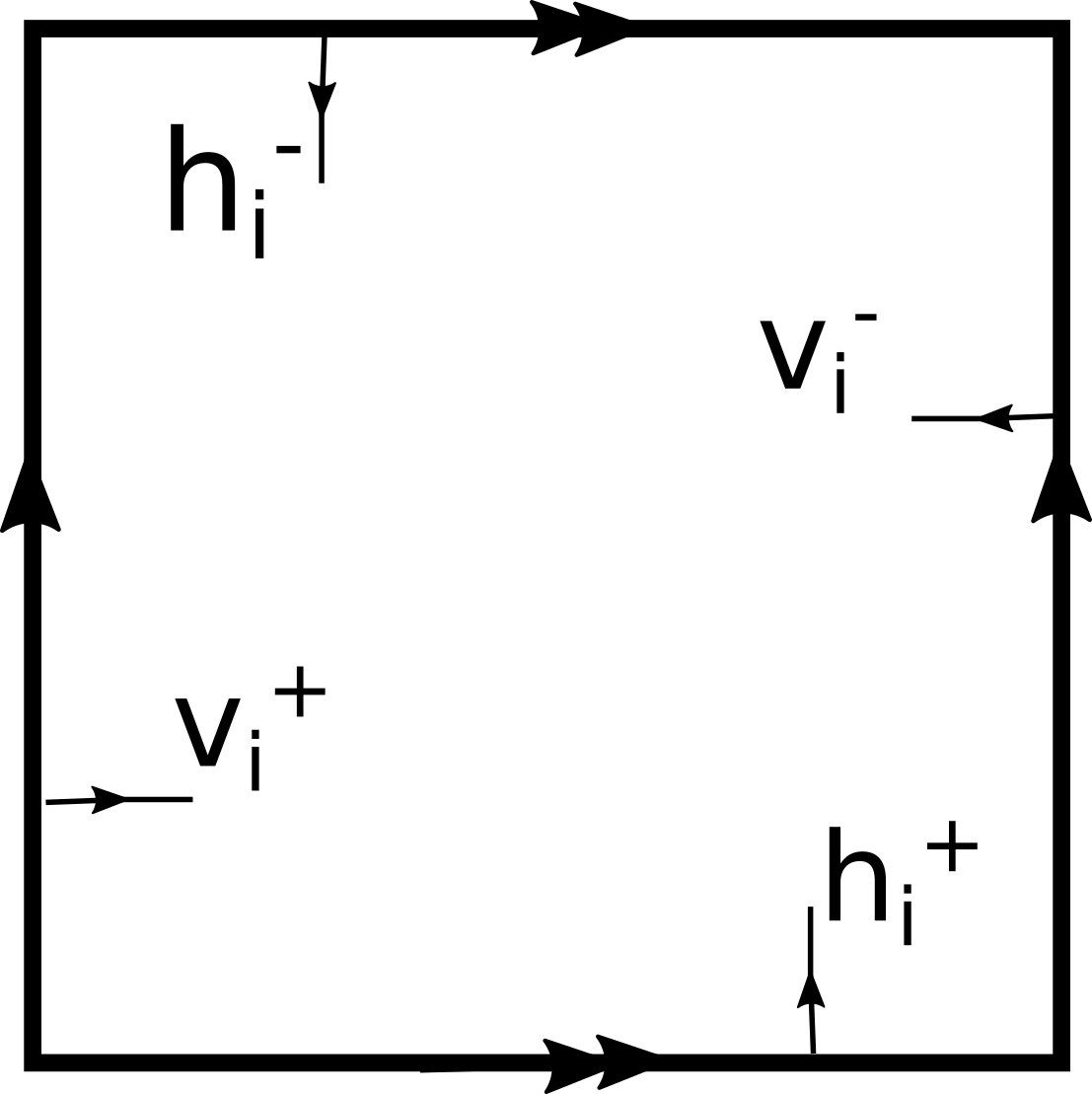}
\hspace*{0mm}
\includegraphics[height=27mm]{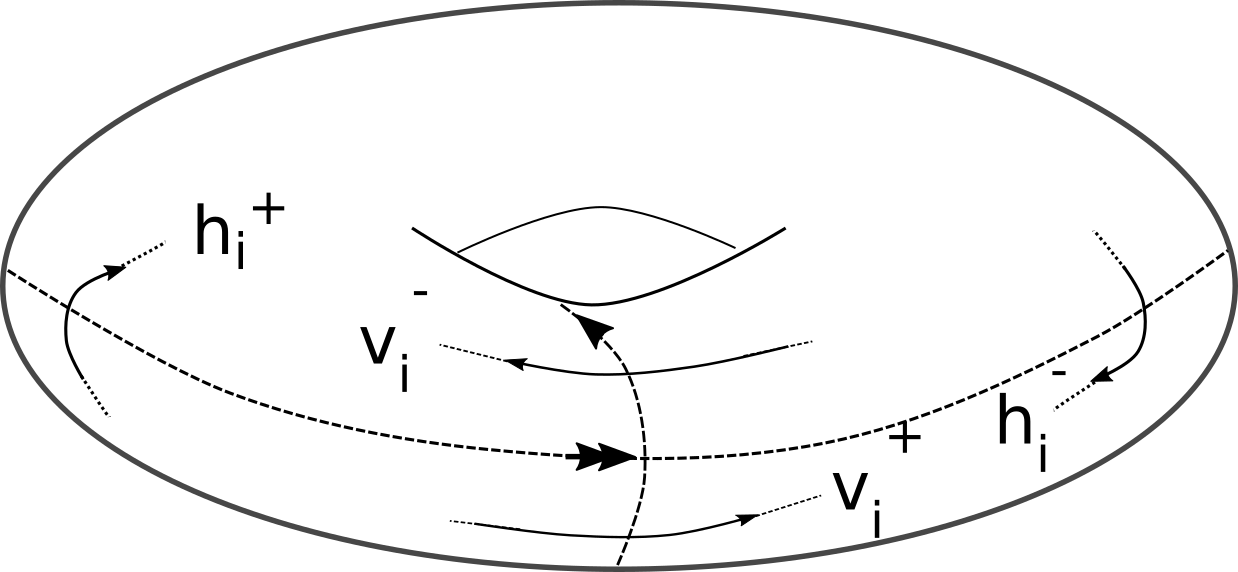}
\caption{Signs of horizontal and vertical crossings in a textile code, as in Definition~\ref{dfn:textile_code}, depicted in the torus (\emph{right}) and on a torus diagram (\emph{left})}
\label{fig:signs_hv}
\end{figure}

\begin{figure}[htb]
\begin{center}
\includegraphics[width=\textwidth]{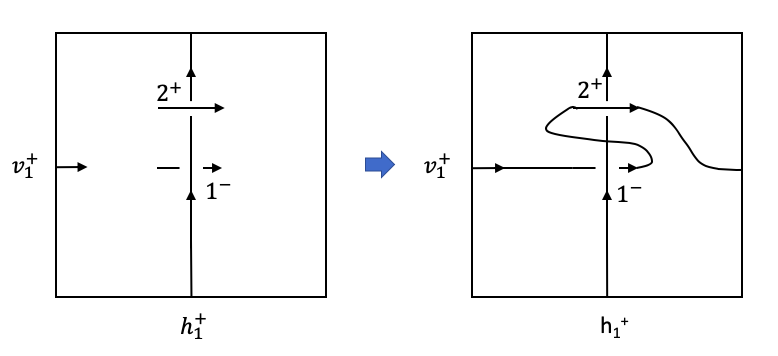}
\caption{This abstract textile code cannot be realized by a torus diagram.}  
\label{fig:unrealizable_codes}
\end{center}
\end{figure}

Though any word in a code is considered up to a cyclic permutation, we can start any word with the symbol that has a smallest integer, for example, from $h_1$ or $v_1$ (with any superscripts) or $1$ (for an overcrossing) or $1^{\pm}$ (for an undercrossing).
 
\begin{dfn}[abstract codes]
\label{dfn:abstract_code}
An \emph{abstract} textile code is a set of cyclic words made up of symbols $i, i^\pm, h_j^\pm, v_k^\pm$ with $i,j,k\in\N$ such that every $h_j,v_k$ has a unique superscript and every $i$ appears with exactly one of $\{i^+, i^-\}$.
\bs
\end{dfn}

An abstract textile code may not represent a torus diagram of a textile. 
Fig.~\ref{fig:unrealizable_codes} shows an attempt to realize the textile code $\{h_1^+ 1 2^+, v_1^+ 1^- 2\}$. 
Each symbol uniquely describes a local pattern around crossings or intersections with square boundaries.
However, these local patterns force a virtual crossing  (see Fig.~\ref{fig:unrealizable_codes}).

\begin{dfn}[complexity of a code and a link]
\label{dfn:complexity}
Let $T$ be an abstract textile code containing symbols $i, h_j^\pm, v_k^\pm$, with $i, j, k \in \N$. The \emph{complexity} of $T$ is defined as
\[
\max(i) + \max(j) + \max(k)
\]

The \emph{complexity} of a textile or a link $L$ in $T^2\times I$ is the minimum complexity of textile codes over all torus diagrams representing $L$. 
\bs
\end{dfn}

An oriented version of the textile in Fig.~\ref{fig:diagonal_textile} has the code as $\{h_1^+1v_2^-2^+, h_2^+v_1^+1^-2\}$ of complexity 4, see Fig.~\ref{fig:corner_turn} (right).  

\section{A topological realizability criterion for textile codes}
\label{sec:criterion}

To develop a realizability criterion for a given abstract textile code $W$, we introduce the textile graph $\Gamma(W)$ in Definition~\ref{dfn:graph_code}. This graph is then used as the skeleton of a $2$-dimensional CW-complex $\mathcal{S}(W)$. We will show that if the code does not give rise to virtual crossings, $\mathcal{S}(W)$ is a torus, and that it is possible to determine this algorithmically.

\begin{dfn}[textile graph]
\label{dfn:graph_code}
Let $W$ be an abstract textile code containing crossing symbols $1,\dots,n$ and intersection symbols $h_1,\dots,h_l$, $v_1,\dots,v_m$ (with superscripts), see Definition~\ref{dfn:abstract_code}.
The \emph{textile graph} $\Gamma(W)$ has $n+l+m$ vertices labelled with $1,\dots,n$, $h_1,\dots,h_l$, $v_1,\dots,v_m$ as in $W$, plus one \emph{corner} vertex labeled with $c$, see Fig.~\ref{fig:code_to_cycles}.
The \emph{extended} code $\bar W$ is obtained from $W$ by adding the \emph{boundary word} $c h_1\dots h_l c v_1 \dots v_m$.
For any adjacent pair of symbols in the extended code $\bar W$, add the unoriented edge between the corresponding vertices of the graph $\Gamma(W)$. All vertices representing crossings are in this way connected either to an adjacent crossing or to a vertex representing the corner of the torus diagram.
\bs
\end{dfn}

If $W$ is the textile code of a torus diagram $D$, then edges of $\Gamma(W)$ are continuous arcs between corresponding crossings or intersection points. Any components of $D$ would be represented by words in the code that do not intersect the boundary of the diagram. We may therefore avoid such components by insisting that each word contains some symbol $h_i^\pm$ or $v_i^\pm$.
The extra edges come from the boundary of the square, first going from left to right along the bottom edge, then from bottom to top along the right edge.
Recall that opposite edges of a square are identified to get a torus, see Fig.~\ref{fig:diagonal_textile}.
 
\begin{dfn}[textile complex]
\label{dfn:complex_code}
Let $\Gamma (W)$ be the textile graph of an abstract textile code $W$, see Definition~\ref{dfn:graph_code}.
For each unoriented edge $(a,b)$ between vertices $a,b\in\Gamma (W)$, we define two oppositely oriented edges labelled with $(a, b)_+$ and $(b, a)_-$.
The positive subscript in $(a, b)_+$ means that $b$ (cyclically) follows $a$ in $\bar W$.
The negative subscript in $(b, a)_-$ means that the symbol $b$ should (cyclically) precede $a$ in $\bar W$.
For each oriented edge, we define the next oriented edge by the symbolic rules: 
$$
(a, i)_\delta \to (i^{\epsilon}, b)_{\epsilon \delta}\;
(a, i^\epsilon)_\delta \to (i, b)_{-\epsilon\delta},\;$$
$$
(a, h_i)_\delta \to (h_i^\epsilon, b)_{\delta\epsilon},\;
(a, h_i^\epsilon)_\delta \to (h_i, b)_{-\delta\epsilon},\;$$
$$
(a, v_i)_\delta \to (v_{i}^\epsilon, b)_{-\delta\epsilon},\;
(a, v_i^\epsilon)_\de \to (v_i, b)_{\delta\epsilon},\;$$
$$
(h_i, c)_\epsilon \to (c, v_j)_\epsilon,\;
(v_i, c)_\epsilon \to (c, h_j)_{-\epsilon}, 
$$
where $\ep,\de\in\{-1,+1\}$ and $a,b$ denote any symbols of the textile code $W$ or, equivalently, vertices of $\Ga(W)$.
Fig.~\ref{fig:turn_at_crossing},\ref{fig:h_turn},\ref{fig:v_turn}, \ref{fig:corner_turn} explain the geometric meaning of choosing the next oriented edge by turning left in a torus diagram represented by the code $W$.
\smallskip

Since the symbolic rules above are written for any abstract code $W$, 
they can define  (in principle) closed oriented cycles so that every edge of $\Ga(W)$ is passed once in each direction.
The \emph{textile complex} $\mathcal{S}(W)$ is obtained by attaching a topological disk (along its boundary) to each resulting oriented cycle.
\bs 
\end{dfn}

\begin{figure}[htb]
\centering
\includegraphics[width=\textwidth]{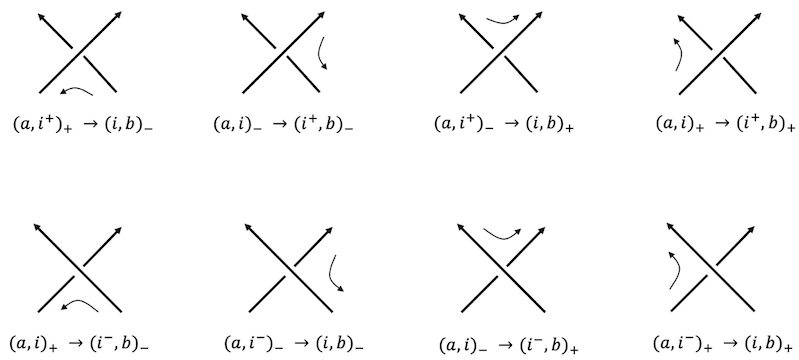}
\caption{Symbolically choosing a next edge by turning left at a crossing in Definition~\ref{dfn:complex_code}:
$a,b$ denote the vertices of $\Ga(W)$ before and after the shown crossing. }  
\label{fig:turn_at_crossing}
\end{figure}

\begin{figure}[htb]
\centering
\includegraphics[width=\textwidth]{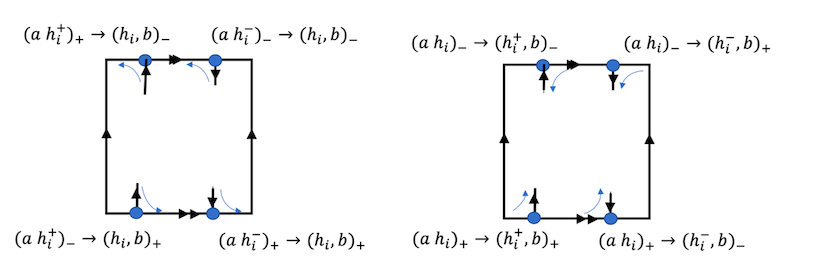}
\caption{Choosing a next edge by turning left at a horizontal intersection in Definition~\ref{dfn:complex_code}: $a,b$ denote the vertices of $\Ga(W)$ before and after the shown point.}
\label{fig:h_turn}
\end{figure} 

\begin{figure}[htb]
\centering
\includegraphics[width=\textwidth]{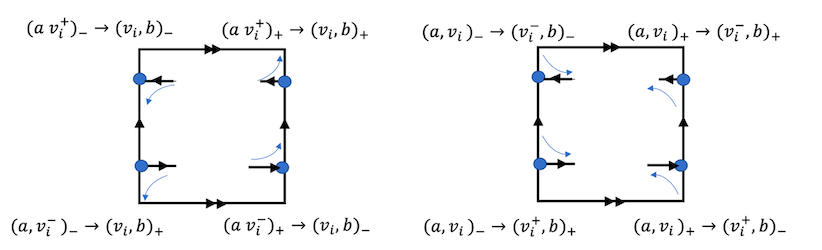}
\caption{Choosing a next edge by turning left at a vertical intersection in Definition~\ref{dfn:complex_code}: $a,b$ denote the vertices of $\Ga(W)$ before and after the shown point.}
\label{fig:v_turn}
\end{figure} 

\begin{figure}[htb]
\centering
\includegraphics[height=36mm]{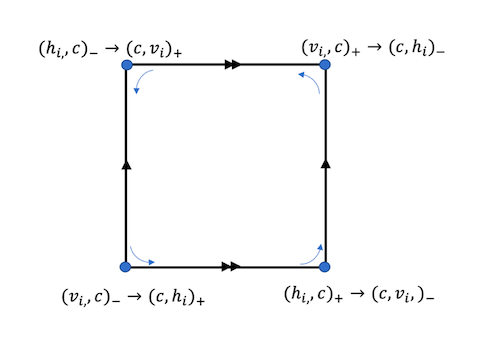}
\hspace*{0mm}
\includegraphics[height=36mm]{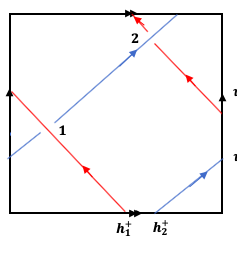}
\caption{\emph{Left}: turning left at a corner in Definition~\ref{dfn:complex_code}.
\emph{Right}: the (oriented) textile structure from Fig.~\ref{fig:diagonal_textile} has the textile code 
$W=\{h_1^+ 1 v_2^- 2^+, h_2^+ v_1^+ 1^- 2 \}$.}
\label{fig:corner_turn}
\end{figure} 

Lemma~\ref{lem:oriented_cycles} proves one part of the realizability in Theorem~\ref{thm:criterion}.

\begin{lem}
\label{lem:oriented_cycles}
Let $W$ be the textile code of a torus diagram $D\subset T^2$.
Let $\bar D$ be obtained from $D$ by adding the meridian and longitude of $T^2$ that map to the square boundary of $D$.
Then the textile complex $\mathcal{S}(W)$ is $T^2$ and $T^2-\bar D$ splits into open disks whose boundaries are oriented cycles from Definition~\ref{dfn:complex_code}.
\end{lem}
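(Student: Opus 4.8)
The plan is to use the hypothesis that $W$ actually comes from a torus diagram $D$ in order to realize the abstract textile graph $\Gamma(W)$ geometrically as the $1$-skeleton of a CW decomposition of $T^2$, and then to check that the $2$-cells prescribed combinatorially in Definition~\ref{dfn:complex_code} coincide with the $2$-cells of that decomposition. First I would identify $\Gamma(W)$ with $\bar D$: the crossings of $D$ are the vertices $1,\dots,n$; the points where $D$ meets the meridian $\mu$ and the longitude $\la$ are the vertices $h_1,\dots,h_l$ and $v_1,\dots,v_m$; the image of the square's corner is the vertex $c$; and the edges are the arcs of $D$, of $\mu$ and of $\la$ joining consecutive such points. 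Checking the conventions of Definition~\ref{dfn:textile_code}, two non-boundary symbols are adjacent in a word of $W$ exactly when the corresponding points are joined by an arc of $D$, while the boundary word $c\,h_1\dots h_l\,c\,v_1\dots v_m$ of Definition~\ref{dfn:graph_code} records precisely the sub-arcs of $\mu$ (namely $c\to h_1\to\dots\to h_l\to c$) and of $\la$ ($c\to v_1\to\dots\to v_m\to c$). Hence $\Gamma(W)$ is the graph $\bar D\subset T^2$, and it remains to describe the faces $T^2\setminus\bar D$.

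Second I would show that every face of $\bar D$ is an open disk. Since $\mu$ and $\la$ meet only at $c$, the set $\mu\cup\la$ cuts $T^2$ into a single open square $Q$, i.e.\ an open disk whose boundary circle lies inside $\bar D$. By the standing convention recalled after Definition~\ref{dfn:graph_code} — every word of $W$ carries some $h_i^\pm$ or $v_i^\pm$ — the diagram $D$ has no component disjoint from $\mu\cup\la$, so every component of $D$ meets the boundary of that square and $\bar D$ is a connected graph in the closed square. A connected graph embedded in a closed disk and containing the bounding circle cuts the disk into open $2$-cells (standard planar topology, e.g.\ by capping off to $S^2$); hence each face of $\bar D$ in $T^2$ is an open disk. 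This already proves the assertion that $T^2\setminus\bar D$ splits into open disks.

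Third I would match each of these disks with one oriented cycle of Definition~\ref{dfn:complex_code}. Fix an orientation of $T^2$ and traverse the boundary of a face keeping the face on the left. At a crossing, at an $h_j$- or $v_k$-intersection, and at the corner $c$, the face corners incident to that vertex correspond bijectively to the transitions listed in Definition~\ref{dfn:complex_code}, and Figures~\ref{fig:turn_at_crossing}, \ref{fig:h_turn}, \ref{fig:v_turn} and \ref{fig:corner_turn} show that ``pass to the next incident edge counterclockwise'' is exactly the symbolic next-edge rule: the subscripts $\pm$ record whether an edge is run with or against the reading direction of $\bar W$, and the signs $\epsilon$ record the over/under data at crossings and the $h,v$-signs of Figure~\ref{fig:signs_hv}. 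The next-edge rules form a permutation of the set of oriented edges, so the oriented edges partition into cycles (as already noted in Definition~\ref{dfn:complex_code}); by the local matching these cycles are precisely the boundary curves of the faces of $\bar D$, each occurring once. Attaching a disk to every cycle therefore puts back exactly the faces we removed, so $\mathcal{S}(W)$ is the original CW decomposition of $T^2$; in particular $\mathcal{S}(W)=T^2$.

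I expect the third step to be the substantive part and the main obstacle: one must verify, uniformly and with the correct signs, that each entry of the next-edge table in Definition~\ref{dfn:complex_code} implements ``turn left'' for a diagram oriented compatibly with $T^2$. The delicate case is the corner vertex $c$, where the four right-angle corners of the square are glued to one point, so the two rules $(h_i,c)_\epsilon\to(c,v_j)_\epsilon$ and $(v_i,c)_\epsilon\to(c,h_j)_{-\epsilon}$ must together route a left turn correctly through all four corner sectors and stay consistent with the sign conventions used at crossings and at the $h$- and $v$-intersections. Everything else reduces to bookkeeping, checking that the geometric picture and the symbolic rules carry the same data.
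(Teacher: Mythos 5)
Your proposal follows essentially the same route as the paper's own proof: identify $\Gamma(W)$ with the extended diagram $\bar D$, use the added meridian and longitude (together with the convention that every word meets the boundary) to see that $T^2\setminus\bar D$ consists of open disks, and observe that the turn-left rules of Definition~\ref{dfn:complex_code} trace exactly the oriented boundaries of these disks, so attaching disks recovers $T^2$ with each edge traversed once in each direction. Your step justifying that the faces are disks is in fact spelled out more carefully than in the paper, and your reliance on the figures for the sign-by-sign verification of the turn-left rules matches the paper's own level of detail.
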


\begin{proof}
If the code $W$ describes the diagram of a textile $D\subset T^2$, then the complement $T^2-\bar D$ splits into disjoint curved polygons, see Fig.~\ref{fig:oriented_cycles}.
The extended diagram $\bar D$ includes a meridian and longitude to cover the case of improper textiles in Fig.~\ref{fig:improper_textiles}, when $T^2-D$ can contain pieces that are not topological disks.
By Definition~\ref{dfn:graph_code} the textile graph $\Ga(W)$ is the diagram $\bar D$, where all crossings become vertices and orientations of arcs are (temporarily) forgotten. Attaching disks along edges gives rise to a single connected component, since $\Ga(W)$, and hence $\mathcal{S}(W)$ are connected.
\smallskip

Following the `turn-left' rules in
Fig.~\ref{fig:turn_at_crossing}-\ref{fig:oriented_cycles}, we trace the (anticlockwisely oriented) boundary of every disk from $T^2-\bar D$.
By Definition~\ref{dfn:complex_code} the textile complex $\mathcal{S}(W)$ is homeomorphic to the torus $T^2$ obtained from $\Ga(W)\approx\bar D$ by attaching disks to the oriented cycles in $\Ga(W)$.
Since every arc of $\bar D^2$ belongs to the boundaries of two adjacent (consistently oriented) disks, every edge of the textile graph $\Ga(W)$ will be traced once in each of two opposite directions. 
\end{proof}

\begin{thm}[realizability of textile codes]
\label{thm:criterion}
An abstract textile code $W$ represents a realizable link $L$ in a thickened torus if and only if the textile complex $\mathcal{S}(W)$ is homeomorphic to a torus $T^2$. 
\end{thm}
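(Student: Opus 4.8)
The plan is to prove the two implications separately. The ``only if'' direction is immediate: if $W$ is realizable then, by definition, $W$ is the textile code of some torus diagram $D\subset T^2$ of a link $L\subset T^2\times I$, and Lemma~\ref{lem:oriented_cycles} gives $\mathcal S(W)\cong T^2$ directly. So the real work is the converse, where a diagram realizing $W$ must be reconstructed from the mere assumption $\mathcal S(W)\cong T^2$.

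Assume $\mathcal S(W)\cong T^2$. First I would record that $\Gamma(W)$ is $4$-regular: every crossing symbol $i$ contributes the two occurrences $i$ and $i^{\epsilon}$ (by Definition~\ref{dfn:abstract_code}), every $h_j$ and every $v_k$ contributes one occurrence in $W$ and one in the boundary word, and $c$ occurs twice in the boundary word. Hence $E=2V$ and $\chi(\mathcal S(W))=V-E+F=F-V$, where $F$ is the number of turn-left cycles of Definition~\ref{dfn:complex_code}. Since $\mathcal S(W)$ is obtained from a rotation system by capping every cycle with a disk, it is always a closed orientable surface whose path-components coincide with those of $\Gamma(W)$; so ``$\mathcal S(W)\cong T^2$'' is equivalent to ``$\Gamma(W)$ is connected and $F=V$.'' This already yields the algorithmic payoff used for Algorithm~\ref{alg:periodB}: realizability is decided by following the symbolic turn-left rules to count the cycles and comparing with the number of vertices.

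To upgrade the homeomorphism $\mathcal S(W)\cong T^2$ to a genuine diagram, fix such a homeomorphism; it presents $\Gamma(W)$ as a graph cellularly embedded in $T^2$ whose complementary faces are open disks, one per turn-left cycle, and whose edge-rotation at every vertex is the one prescribed by Definition~\ref{dfn:complex_code} — this is the standard equivalence between rotation systems and cellular embeddings. The content of Figures~\ref{fig:turn_at_crossing},~\ref{fig:h_turn},~\ref{fig:v_turn},~\ref{fig:corner_turn} is exactly that these symbolic rules are the faithful shadow of tracing face boundaries by turning left in the local models of: a transversal over/under crossing at a vertex $i$ (the two edges of the occurrence $i$ opposite in the rotation, forming the overarc; the two edges of $i^{\epsilon}$ forming the underarc of sign $\epsilon$); a strand meeting the meridian transversally at $h_j$ and the longitude at $v_k$, with the signs of Fig.~\ref{fig:signs_hv}; and the four-fold identified corner of a fundamental square at $c$. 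Smoothing each crossing vertex into a signed crossing therefore turns the embedded graph into a torus diagram $D$: the edges carrying the boundary word $c\,h_1\cdots h_l\,c\,v_1\cdots v_m$ form two simple closed curves meeting transversally only at $c$, hence a meridian--longitude pair that cuts $T^2$ into one square carrying $h_1,\dots,h_l$ left-to-right along the bottom edge and $v_1,\dots,v_m$ bottom-to-top along the right edge, exactly as demanded by Definition~\ref{dfn:textile_code}. Pushing the overarc of each smoothed crossing slightly towards $T^2\times\{1\}$ then lifts $D$ to an embedded link $L\subset T^2\times I$, and tracing the components of $L$ recovers the words of $W$ with the prescribed superscripts by construction, so $W$ is realizable.

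I expect the main obstacle to be precisely the verification underlying the previous paragraph: matching the purely symbolic next-edge rules of Definition~\ref{dfn:complex_code} to the geometric local pictures, figure by figure. In particular one must check that the rotation forced at a crossing vertex is the \emph{alternating} one, so that ``smoothing'' genuinely produces a transversal double point rather than a tangency; that the superscript $\epsilon$ records precisely the crossing-sign and $h/v$-sign conventions of Figures~\ref{fig:Gauss_codes} and~\ref{fig:signs_hv}; and that the boundary cycles emerge as disjoint simple closed curves in the claimed cyclic order. Once this is in place, the facts that two simple closed curves on $T^2$ meeting transversally once present it as a square, and that a closed connected orientable surface with $\chi=0$ is a torus, close the argument with only standard tools.
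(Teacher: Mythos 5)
Your proposal is correct and follows essentially the same route as the paper: the ``only if'' direction is delegated to Lemma~\ref{lem:oriented_cycles}, and the ``if'' direction reconstructs a torus diagram by viewing $\Gamma(W)$ as embedded in $\mathcal{S}(W)\cong T^2$, treating the boundary-word edges as a meridian--longitude pair, and converting the crossing vertices back into signed crossings before lifting to $T^2\times I$ --- which is exactly the paper's (much terser) argument of embedding $\Gamma(W)\subset\mathcal{S}(W)=T^2$ and replacing vertices by crossings using the signs and word orders of $W$. Your version simply supplies the rotation-system/cellular-embedding details and the Euler-characteristic bookkeeping that the paper defers to the algorithm section.
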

\begin{proof}
The part `only if' $\Rightarrow$ is Lemma~\ref{lem:oriented_cycles}.
The part `if '$\Leftarrow$ assumes that $\mathcal{S}(W)$ is a torus $T^2$. 
We embed $\Ga(W)\subset\mathcal{S}(W)=T^2$ into $\R^3$ and remove the arcs between all vertices $h_i,v_j,c$.
To replace the remaining vertices by crossings, use their signs in $W$ and orientations from left-to-right orders on words of $W$.
\end{proof}

\begin{figure}[htb]
\centering
\includegraphics[height=42mm]{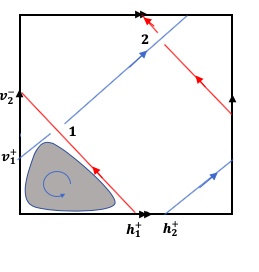}
\hspace*{0mm}
\includegraphics[height=42mm]{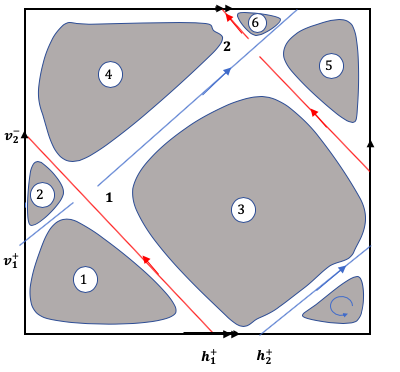}
\caption{\emph{Left}: the marked cycle has the edges $(h_1^+ 1)_+\to(1^- v_1^+)_-\to(v_1^+ c)\to(c h_1^+)$ following the `turn-left' rules in Definition~\ref{dfn:complex_code}.
\emph{Right}: All 7 cycles.}
\label{fig:oriented_cycles}
\end{figure} 

\section{A fast algorithm to test realizability of textile codes}
\label{sec:algorithm}

The realizability criterion in Theorem~\ref{thm:criterion} is algorithmically verified by the five steps below. 
Each step is illustrated by explicit calcuations on the textile of Fig.~\ref{fig:diagonal_textile}.
\smallskip

The algorithm as described will function for codes with any number of individual words, and hence textiles with any number of components. In this paper we employ it only on single word codes.

\noindent
\emph{Step 1}.
For any abstract textile code $W$, each pair of symbols $\{i,i^{\pm}\}$ generates a vertex (a future crossing).
Every symbol $h_i^{\pm},v_j^{\pm}$ generates its own vertex (a future intersection with a meridian and longitude).
We add a corner vertex $c$.
\smallskip

The textile code $W=\{h_1^+ 1 v_2^- 2^+, h_2^+ v_1^+ 1^- 2\}$ from Fig.~\ref{fig:oriented_cycles} generates two crossing vertices labelled with $1,2$ (without signs for simplicity); two (horizontal) vertices $h_1,h_2$; two (vertical) vertices $v_1,v_2$; one corner vertex $c$, see Fig.~\ref{fig:code_to_cycles}.  
\medskip

\noindent
\emph{Step 2}.
Generate unoriented edges of the textile graph $\Gamma(W)$ from pairs of successive symbols in $W$ and the extra word $ch_1\dots h_l v_1\dots v_m$ going from the corner vertex through first horizontal, then vertical vertices, see Definition~\ref{dfn:graph_code}.
\smallskip

The above code $W$ has the four edges $(h_1, 1), (1, v_2), (v_2, 2), (2, h_1)$ from the first word of $W$, four edges $(h_2, v_1), (v_1, 1), (1, 2), (2, h_2)$ from the second word of $W$ and six edges $(c, h_1)$, $(h_1, h_2)$, $(h_2, c)$, $(c, v_1)$, $(v_1, v_2)$, $(v_2, c)$ from the extra word, see Fig.~\ref{fig:code_to_cycles}.
\medskip

\begin{figure*}[htb]
\centering
\includegraphics[width=\textwidth]{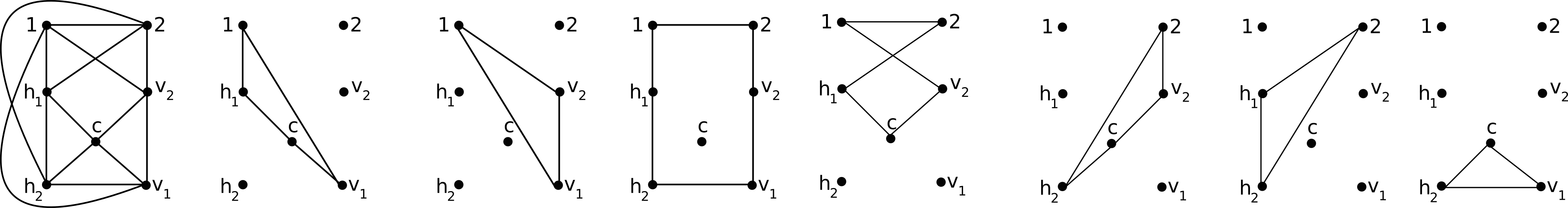}
\caption{The abstract textile code $W=\{h_1^+ 1 v_2^- 2^+, h_2^+ v_1^+ 1^- 2\}$ has the extra word $ch_1h_2cv_2v_2$.
 Its textile graph $\Ga(W)$ and all 7 cycles from Table~\ref{tab:oriented_cycles} are shown.}
\label{fig:code_to_cycles}
\end{figure*} 

\noindent
\emph{Step 3}.
Find oriented cycles in $\Ga(W)$ by the `turn-left' rules in Definition~\ref{dfn:complex_code} illustrated in Fig.~\ref{fig:turn_at_crossing},\ref{fig:h_turn},\ref{fig:v_turn}, \ref{fig:corner_turn}.
\smallskip

For the code $W$, we start from the pair of successive symbols $(h_1^+, 1)_+$, say with the positive subscript meaning that $1$ follows $h_1^+$ in the extended code $\bar W$. 
The next edge starts with $1^-$ and the `turn-left' rule $(a, i)_\delta \to (i^{\epsilon}, b)_{\epsilon \delta}$ for $a=h_1^+$, $i=1$, $\de=+1$, $\ep=-1$ implies that $b$ is adjacent to $1^-$.
Since $\de\ep=-1$, the symbol $b$ precedes $1^-$ in $\bar W$.
So $b=v_1^+$, the next edge is $(1^-, v_1^+)_-$.
\smallskip

The final vertex is $v_1^+$, so we will use the rule from Fig.~\ref{fig:v_turn}: $(a, v_i^\epsilon)_\de \to (v_i, b)_{\delta\epsilon}$ for $a=1^-$, $i=1$, $\ep=+1$, $\de=-1$.
Since $\de\ep=-1$, the next symbol $b$ should precede $v_1$ in the extended code $W$ including the extra word $ch_1h_2cv_1v_2$.
Then $b=c$, so the next edge is $(v_1, c)_-$, where the superscript of $v_1$ is skipped for a move along a vertical edge. 
The next edge is found by the corner rule $(v_i, c)_{\de}\to (c, h_j)_{-\de}$ with $i=j=1$, $\de=-1$ in Fig.~\ref{fig:corner_turn}.
The rule $(a, h_i)_{\de}\to (h_i^{\ep} b)_{\de\ep}$ for $i=1$, $\ep=+1$, $\de=-1$ gives $b=1$ following $h_1^+$ in $\bar W$. 
The first cycle in Table~\ref{tab:oriented_cycles} is complete. 
\smallskip

Selecting the unused edge $(1, v_2^-)_+$, we continue tracing edges and forming cycles until we get the other 6 cycles in Fig.~\ref{fig:oriented_cycles}.
\medskip

\begin{table*}[!t]
\caption{All 7 cycles obtained from $W=\{h_1^+ 1 v_2^- 2^+, h_2^+ v_1^+ 1^- 2\}$, see Fig.~\ref{fig:oriented_cycles} and~\ref{fig:code_to_cycles}.}
\label{tab:oriented_cycles}
\centering
\begin{tabular}{ |c|c| }
\hline
cycle & oriented edges in the cyclic order\\
\hline
$1$ & $(h_1^+, 1)_+\to (1^-, v_1^+)_-\to (v_1, c)_-\to (c, h_1)_+ $ \\
\hline
$2$ & $(1, v_2^-)_+\to (v_2, v_1)_-\to (v_1^+, 1^-)_+$ \\
\hline
$3$ & $(v_2^-, 2^+)_+\to (2, 1^-)_-\to (1, h_1^+)_-\to (h_1^+, h_2^+)_+\to (h_2^+, v_1^+)_+\to (v_1^+, v_2^-)_+$ \\
\hline
$4$ & $(2^+, h_1^+)_+\to (h_1, c)_+\to (c, v_2)_-\to (v_2^- ,1)_-\to (1^+, 2)_+$ \\
\hline
$5$ & $(2^+, v_2^-)_-\to (v_2, c)_+\to (c, h_2)_+\to (h_2^+, 2)_-$ \\
\hline
$6$ & $(h_1^+, 2)_-\to (2, h_2^+)_+\to (h_2, h_1)_+$ \\
\hline
$7$ & $(v_1^+, h_2^+)_-\to (h_2, c)_+\to (c, v_1)_+ $ \\
\hline
\end{tabular}
\end{table*}

\noindent
\emph{Step 4}.
Check that each oriented edge is passed once in each of two opposite directions.
If any contradictions emerge, then the code $W$ is unrealizable.
\medskip

\noindent
\emph{Step 5}.
Find the Euler characteristic of $\mathcal{S}(W)$ as $\chi=\#{\mbox{vertices of }\Ga(W)}-\#{\mbox{edges of }\Ga(W)}+\#{\mbox{oriented cycles of }\Ga(W)}$.
The torus $T^2$ is uniquely characterized up to a homeomorphism as a compact orientable surface with $\chi=0$ and without boundary.
Orientability and empty boundary were checked in Step 4.  
If $\chi=0$, the code is realizable, otherwise not.
\medskip

The pseudocode for the non-trivial steps of the algorithm is given in \ref{alg:periodB} whose linear complexity is proved below.

\begin{thm}[algorithm complexity]
\label{thm:algorithm}
Given an abstract textile code $W$ of a length $N$, the realizability algorithm has the complexity $O(N)$ to decide if $W$ represents a link in $T^2\times I$. 
\end{thm}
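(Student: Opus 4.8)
The plan is to walk through the five steps of Algorithm~\ref{alg:periodB}, bound the running time of each by a constant times $N$, and conclude that their sum is $O(N)$.

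First I would establish the size estimates. If $n$ is the number of crossing labels and $l,m$ the numbers of horizontal and vertical intersection symbols, then $N = 2n + l + m$, since each crossing contributes the two symbols $i, i^{\epsilon}$ and each $h_j, v_k$ contributes one; hence $n,l,m$ and the number $n+l+m+1$ of vertices of $\Gamma(W)$ are all $O(N)$. Appending the boundary word $c\, h_1 \dots h_l\, c\, v_1 \dots v_m$ of length $l+m+2$ produces the extended code $\bar W$ of length $O(N)$, and since each symbol occurrence in $\bar W$ is incident to exactly the two edges joining it to its cyclic neighbours in its word, $\Gamma(W)$ is $4$-regular and has $O(N)$ edges, hence $O(N)$ oriented edges.

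Next I would do the bookkeeping in one pass over $\bar W$: build arrays, indexed by symbol, recording for every occurrence its cyclic predecessor and successor within its word, the sign $\epsilon$ of every crossing label, a pointer linking the two occurrences of every crossing label and of every $h_j, v_k$, and the four neighbours of $c$ in boundary-word order. This $O(N)$ preprocessing realizes Steps~1 and~2. With it, every ``turn-left'' rule of Definition~\ref{dfn:complex_code} computes the successor of a given oriented edge from a constant number of look-ups — e.g. $(a,i)_\delta \to (i^{\epsilon}, b)_{\epsilon\delta}$ reads the stored sign of $i$ and the stored neighbour of the occurrence $i^{\epsilon}$ selected by $\epsilon\delta$ — so each successor costs $O(1)$. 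These rules assign to every oriented edge exactly one successor and exactly one predecessor, hence define a fixed-point-free permutation of the oriented edges whose orbits are the cycles of Step~3; tracing the orbits by repeatedly starting at an unused oriented edge and following successors until the start recurs visits each oriented edge once, so Step~3 runs in $O(N)$ time and yields $O(N)$ cycles. Step~4 is a linear scan verifying that each oriented edge was used exactly once (equivalently, each unoriented edge once in each direction), which simultaneously certifies orientability and empty boundary of $\mathcal{S}(W)$. Step~5 evaluates the Euler characteristic $\chi$ of $\mathcal{S}(W)$ from the already-available vertex, edge and cycle counts and tests $\chi = 0$, in $O(N)$ (indeed $O(1)$ once the counts are known). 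Adding the five bounds gives the claimed $O(N)$.

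The main obstacle is justifying the constant-time successor evaluation: one must check that each ``turn-left'' rule — in particular the corner rules $(h_i,c)_\epsilon \to (c, v_j)_\epsilon$ and $(v_i,c)_\epsilon \to (c, h_j)_{-\epsilon}$, where $v_j$ or $h_j$ is pinned down by the fixed cyclic structure of the boundary word, and the crossing rules, where one must distinguish the occurrence $i$ from $i^{\epsilon}$ (and the $W$-occurrence of $h_j, v_k$ from its boundary occurrence) — is resolved entirely by the precomputed tables rather than by any search. A secondary point is that Step~3 terminates in linear time even for an unrealizable $W$: this holds because the successor map is a fixed-point-free permutation of the oriented edges for every code satisfying Definition~\ref{dfn:abstract_code}, so the traversal always halts having touched each oriented edge once, whether or not $\mathcal{S}(W)$ turns out to be a torus.
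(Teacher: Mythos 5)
Your proposal is correct and takes essentially the same route as the paper's (much terser) proof: generate the $O(N)$ oriented edges in a linear pass, then assign each oriented edge to a cycle in constant time via the turn-left rules, and read off the Euler characteristic from the counts. You simply make explicit the bookkeeping (precomputed neighbour/sign tables giving $O(1)$ successor look-ups) and the termination argument for unrealizable codes that the paper leaves implicit.
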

\begin{proof}
We read along the $N$ symbols of the code $W$ in both directions to generate the set of oriented edges giving $2N$ steps in total to generate $4N$ oriented edges. 
Each edge will be assigned to a cycle in a single step for a total of $4N$ steps.
The overall algorithm is therefore of linear complexity in $N$.\
\end{proof}

Note that for input codes, we replace the formal superscript and subscript symbols $\pm$ with the  values $\pm 1$ to enable arithmetic determination of adjacent edges in a cycle. 
\smallskip

Steps $1$ may be described trivially -  creation of the graph simply involves the listing of all distinct symbols. Edges are listed as distinct cyclically adjacent pairs in words.
\smallskip

Step $2$ is also trivial - for each edge $[a,b]$ in the graph, where $a$ an $b$ are distinct symbols (possibly with subscripts and superscripts) in the code, two oriented arc symbols $[a, b]_+$ and $[b, a]_-$ are generated
\smallskip


\begin{table*}[!t]
\caption{Enumeration of all abstract and realizable textile codes of up to complexity $5$ as introduced in Definition~\ref{dfn:complexity} up to cyclic permutations.}
\centering
\begin{tabular}{ |c|c|c|c|c|c| }
\hline
Complexity &  Crossings & Horizontal Points & Vertical Points &Abstract Textile Codes & Realizable Textile Codes\\
\hline
$3$ & $1$ & $1$ & $1$ & $48$ & $8$ \\
\hline
$4$ & $2$ & $1$ & $1$ & $1920$ & $672$\\
\hline
$5$ & $1$ & $3$ & $1$ & $3840$ & $368$\\ 
\hline
$5$ & $2$ & $2$ & $1$ & $23040$ & $2816$ \\
\hline
$5$ & $3$ & $1$ & $1$ & $161280$ &  $24960$\\
\hline
\end{tabular}
\label{tab:allcodes}
\end{table*}
\smallskip

Algorithm~\ref{alg:periodB} covers steps $3$ to $5$. Contradictions -- the appearance of opposite oriented edges in the same cycle or edges that cannot be assigned -- return a false result, otherwise the algorithm counts complete cycles and tests that the Euler characteristic is $0$. 
There are twice as many unoriented edges as vertices, so $\chi=0$ is equivalent to checking that the number of cycles is equal to the number of distinct symbols in the code. 
\smallskip

The algorithm coded in C++ and can be made available in May 2020.
\smallskip
Sections~\ref{sec:reductions} and \ref{sec:classification} will use these algorithms to classify all links in a thickened torus up to complexity 5.

\begin{algorithm}
\KwIn{EDGEPAIR - set of oriented edges}
\KwOut{True/False flag for realizability}
\medskip
\tcp{Start cycle count}
$cycle \leftarrow 0$\;
\tcp{Initiate the first cycle}
CYC $\leftarrow \emptyset$\;
inputpair  $\leftarrow (a,b)_\delta, \delta = \pm 1 \in EDGEPAIR$\;
\tcp{Runs until all edges have been used}
\While {EDGEPAIR $\neq$ USEDPAIR}{
        \tcp{runs while the next input pair is not already part of the cycle. Returns FALSE if a cycle contains both oriented passes of a graph edge.}
		\While{inputpair $\notin$ CYC}{
			\tcp{Selects edge next to \textit{inputpair}}
			\uCase{$b=i, i \in \N$}{
				Set {outputpair} $\leftarrow (i^\epsilon, d)_{\delta\epsilon}$ from EDGEPAIR\;
				\lIf {$(d, i^\epsilon)_{-\delta\epsilon} \in$ CYC}{\Return False 	
				}
			\uCase{$b = i^\epsilon, i \in \N, \epsilon = \pm 1$}{
				\textit{outputpair} $\leftarrow (i, d)_{-\delta\epsilon}$ from EDGEPAIR\;
				\lIf {$(d, i)_{\delta\epsilon} \in$ CYC}{\Return False }	
			}
			\uCase{$b = h_j,  j \in \N$}{
				\textit{outputpair} $\leftarrow (h_j^\epsilon, d)_{-\delta\epsilon}, \epsilon = \pm 1$\;
				\lIf {$(d, h_j^\epsilon)_{\delta\epsilon} \in $ CYC}{\Return False}					
				}
			\uCase{$b = h_j^\epsilon, j \in \N, \epsilon= \pm 1$}{
				\textit{outputpair} $ \leftarrow (h_j, d)_{-\delta\epsilon}$
			}
			\uCase{$b = v_k, k \in \N$}{
				\textit{outputpair} $\leftarrow (v_k^\epsilon, d)_{-\delta\epsilon}, \epsilon = \pm 1$\;	
				\lIf {$(d, v_k^\epsilon)_{\delta\epsilon} \in$ CYC}{\Return False}		
				}
			\uCase {$b = v_k^\epsilon, k \in \N, \epsilon = \pm 1$}{
				\textit{outputpair} $\leftarrow (v_k, d)_{\delta\epsilon}$
			}
			\uCase {$b = c$, $a = h$}{
			    \textit{outputpair}$ \leftarrow (c, v_i)_{\delta'}$
			   }
		    \uCase {$b = c$, $a = v$}{
			    \textit{outputpair}$ \leftarrow (c, h_i)_{\delta'}$
			    }
			\tcp{adds next pair to current cycle} 
			CYC $\leftarrow \textit{outputpair}$\;
			\tcp{continues with selected pair as input pair}
			\textit{inputpair} $\leftarrow$ \textit{outputpair}\;
			}
		\tcp{Increments count when cycle complete}
		\uIf  {$|\text{CYC}| > 1$}
			{$cycle \leftarrow cycle +1$
			\tcp{tracks which edges have been used in a cycle}
			 USEDPAIR $\leftarrow$ CYC\;
			  CYC $\leftarrow \emptyset$}
			  \tcp{starts new cycle with unused oriented edge}
			  \textit{inputpair} $\leftarrow (a,b)_\delta, \delta = \pm 1 \in$ EDGEPAIR and $\notin$ USEDPAIR
			\Else{\Return False}
			}
		}
		
\tcp{Checks Euler characteristic}
	\Return cycle $= \max(i) + \max(j) + \max(k) + 1$
\caption{Steps $3$ to $5$ of the algorithm in section~\ref{sec:algorithm}: builds oriented cycles in $\Ga(W)$ and tests that the textile complex $\mathcal{S}(W)$ from Definition~\ref{dfn:complex_code} is homeomorphic to $T^2$.}
\label{alg:periodB}
\end{algorithm}

\section{Reductions of textile codes by Reidemeister moves}
\label{sec:reductions}

The realizability algorithm in section~\ref{sec:algorithm} substantially reduces the number of abstract codes of a given complexity.  
We are still left with a very large number of codes - certainly too large to investigate their isotopic classes. 
However, we have implemented further reductions by using Reidemeister moves in Fig.~\ref{fig:Reidemeister}.
\smallskip


For the diagram of a realizable textile code, the numbering of crossing vertices is arbitrary. 
For a diagram of $n$ crossings, any action of the permutation group $S_n$ on the crossings will result in an identical diagram, which justifies the reductions below. 
\smallskip

We will group our realizable codes into those that are just permutations of each other and eliminate all of these except - by convention - the code where crossings are numbered in ascending order in terms of where they are first encountered.
\smallskip

We also note that there are some cases where the complexity of a code from Definition~\ref{dfn:complexity} is greater than that of the link it describes - that is, the number of crossings in a textile may be reduced via Reidemeister moves. 
Fig.~\ref{fig:reidcodes} shows the patterns in the code reducible by Reidemeister moves I and II. 

\begin{figure}[htb]
\centering
\includegraphics[width=\textwidth]{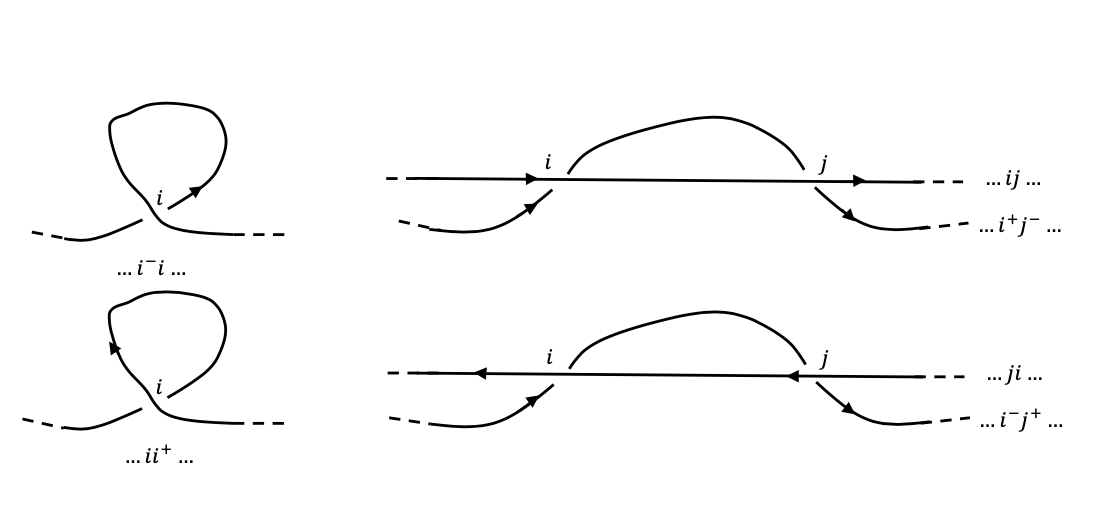}
\caption{Codes indicating applicability of Reidemeister moves in Fig.~\ref{fig:Reidemeister}.  
\emph{Left} : the application of Reidemeister move I.
\emph{Right}: the application of move II.} 
\label{fig:reidcodes}
\end{figure}

Any diagrams whose codes contain a pattern $\ldots i i^\pm \ldots$ or $\ldots i^\pm i \ldots $ represent links isotopic to links of a lower complexity by Reidemeister move I in the left hand side picture of Fig.~\ref{fig:reidcodes}.
Similarly, any realizable codes which contain a pattern  $ \ldots ij \ldots i^+ j^- \ldots$ or 
$ \ldots ji \ldots i^- j^+ \ldots$ represent knots isotopic to those of lower complexity by Reidemeister move II in Fig.~\ref{fig:reidcodes}. 
\smallskip

The implementation of these steps substantially reduced the number of codes, such that we were able to investigate the configuration of proper textiles they represented directly. 
\smallskip

Note that all single component textiles of complexity $3$ were eliminated at this stage - their only crossing is a self-crossing and therefore resolvable by Reidemeister move I. 
The remaining codes are given in the column labelled 'reduced textile codes' in Table~\ref{tab:redcodes}, where the numbers  are much smaller than in the final column of Table~\ref{tab:allcodes}.
\smallskip

\begin{table*}[!t]
\caption{Enumeration of all textile codes of complexity $\leq 5$ after reduction by vertex permutation and Reidemeister moves}
\centering
\begin{tabular}{ |c|c|c|c|c|c| }
\hline
Complexity &  Crossings & Horizontal Points & Vertical Points &Reduced textile codes & Isotopically distinct classes\\
\hline
$4$ & $2$ & $1$ & $1$ & $8$ & $8$\\
\hline
$5$ & $2$ & $2$ & $1$ &  $48$ & $16$\\ 
\hline
$5$ & $3$ & $1$ & $1$ & $32$ & $8$\\
\hline
\end{tabular}
\label{tab:redcodes}
\end{table*}


The much smaller numbers of textiles can be investigated geometrically to determine whether any were equivalent under ambient isotopies, or had any other relationships. 
\smallskip

For codes of complexity $4$ with $2$ crossings, none of the eight realizable codes were found to be isotopic to each other. 
\smallskip

Codes of complexity $5$ with $2$ crossings were found to be related to textiles given by codes of complexity $4$ via a homeomorphism of the torus called a \emph{Dehn twist}. They are thus distinct as embeddings in a fixed torus.
\smallskip

Fig.~\ref{fig:dehntwist} demonstrates such a twist - it is also possible to have a twist in the opposite orientation. 
Note that the addition of such a twist is visible in the textile code as an additional $v_i^\pm$ symbol.
\smallskip

A Dehn twist is not, however, an ambient isotopy of the embedded link. 
Each such textile, however, was found to be ambient isotopic to two others via a simple translation (involving no Reidemeister moves), see Fig.~\ref{fig:dehniso}. 
This reduces the number of non-isotopic classes from $48$ to $16$

\begin{figure}[htb]
\centering
\includegraphics[width = \textwidth]{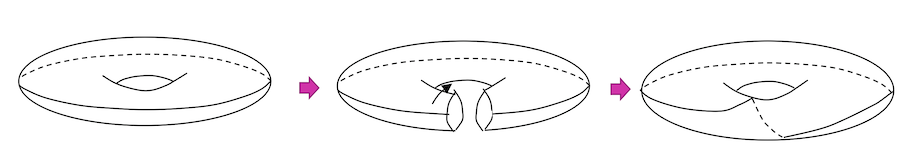}
\hspace*{0mm}
\includegraphics[width = \textwidth]{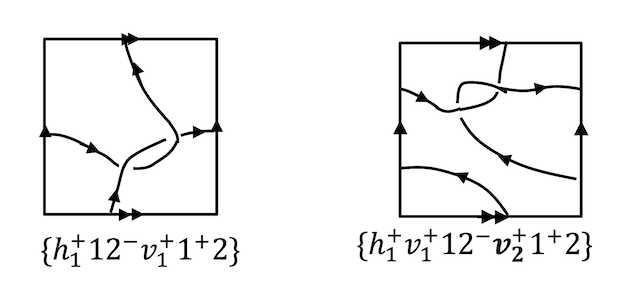}
\caption{\emph{Top}: A meridional Dehn twist
\emph{Bottom}: A complexity $5$ textile code with two crossings related to a textile with fewer crossings via a Dehn twist.}
\label{fig:dehntwist}
\end{figure} 

Sixteen of the codes of complexity $5$ with $3$ crossings were found to be related to a $2$ crossing textile by a pair of Reidemeister moves shown in Fig.~\ref{fig:3to2} 
\smallskip

\begin{figure}[htb]
\begin{center}
\includegraphics[width = \textwidth]{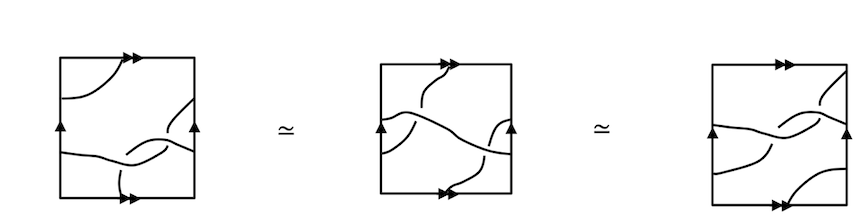}

\includegraphics[width = \textwidth]{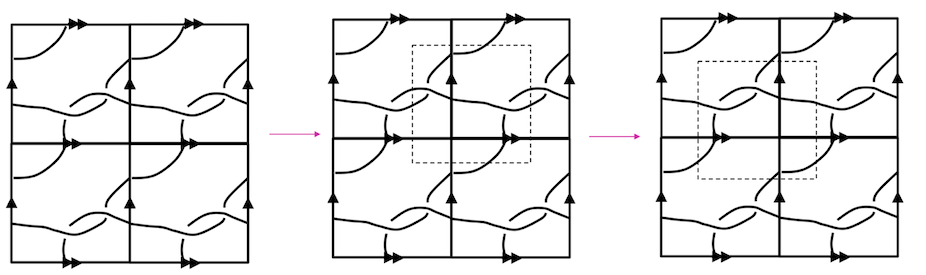}
\caption{\emph{Top}: A meridional Dehn twist
\emph{Bottom}: Three complexity $5$ textile stuctures with two crossings (in a square) are ambient isotopic to each other.}
\label{fig:dehniso}
\end{center}
\end{figure} 

The remaining sixteen were found to represent eight distinct textiles, with pairs of textiles related by an isotopy which 'rotates' the embedded link as shown in Fig.~\ref{fig:3torinv}.

\begin{figure}[htb]
\centering
\includegraphics[width=\textwidth]{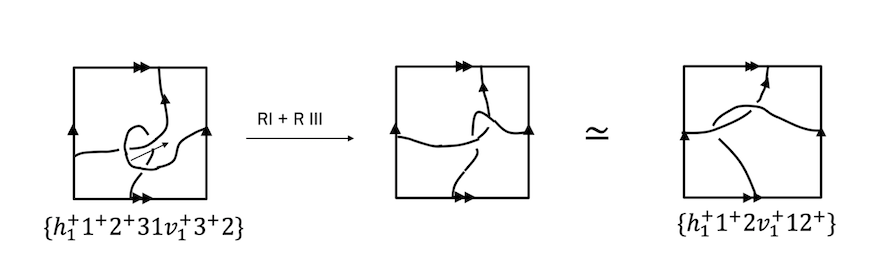}
\caption{A textile of complexity $5$ with $3$ crossings which is ambient isotopic to one of complexity $4$ with $3$ crossings.} 
\label{fig:3to2}
\end{figure}

\begin{figure}[htb]
\centering
\includegraphics[width=\textwidth]{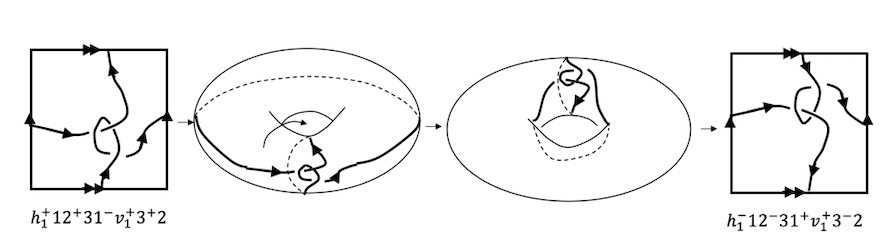}
\caption{Two knots of complexity $5$ that are ambient isotopic} 
\label{fig:3torinv}
\end{figure}

\section{An isotopic classification of textiles up to complexity $5$.}
\label{sec:classification}

To label the remaining textiles, we use a short notation $n^k_{(x,y)}$, where $n$ is the number of crossings, $k$ the number of components and $(x,y)$ represents the homology class expressed as the total sum of meridional and longitudinal windings of a link component (signed for their orientations) around the torus.
\smallskip

The symbol $\tilde{n}$ is used to denote a knot related to another by a swap of  under- and overcrossings.
The homology class subscript will normally distinguish textiles related to each other by orientation reversal. However, in the special case where two knots whose diagrams present as mirror images of each other have homology class $(a, 0)$ or $(0,b)$ we will differentiate these by placing a bar over the $0$.
\smallskip

Fig.~\ref{fig:symbols} illustrates these considerations in the case of two distinct textiles of complexity $5$ with two crossings.

\begin{figure}[htb]
\centering
\includegraphics[width = \textwidth]{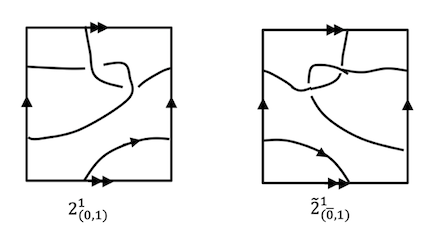}
\caption{Symbols denoting two textiles represented by codes of complexity $5$.} 
\label{fig:symbols}
\end{figure}


To test that we have indeed all non-isotopic knots, we calculate the \emph{Zenkina polynomial} associated with each knot \cite{zenkina2016invariant}. 
The Zenkina polynomial can be defined for knot diagrams in any orientable surface. 
Definition~\ref{dfn:zenkpoly} will be specific to single component knots in a thickened torus. 
\smallskip

The Gauss code of a link gives rise to a \emph{Gauss diagram}, in which the symbols $i$ are placed at points around a circle in the order in which they appear in the code, and chords are drawn across the circle joining identical symbols, see Fig.~\ref{fig:trefgauss}.
To get the Gauss diagram of a textile code, we similarly place crossing labels around the circle while ignoring any $h_i^\pm$ and $v_i^\pm$ symbols. 

\begin{figure}[htb]
\begin{center}
\includegraphics[height=25mm]{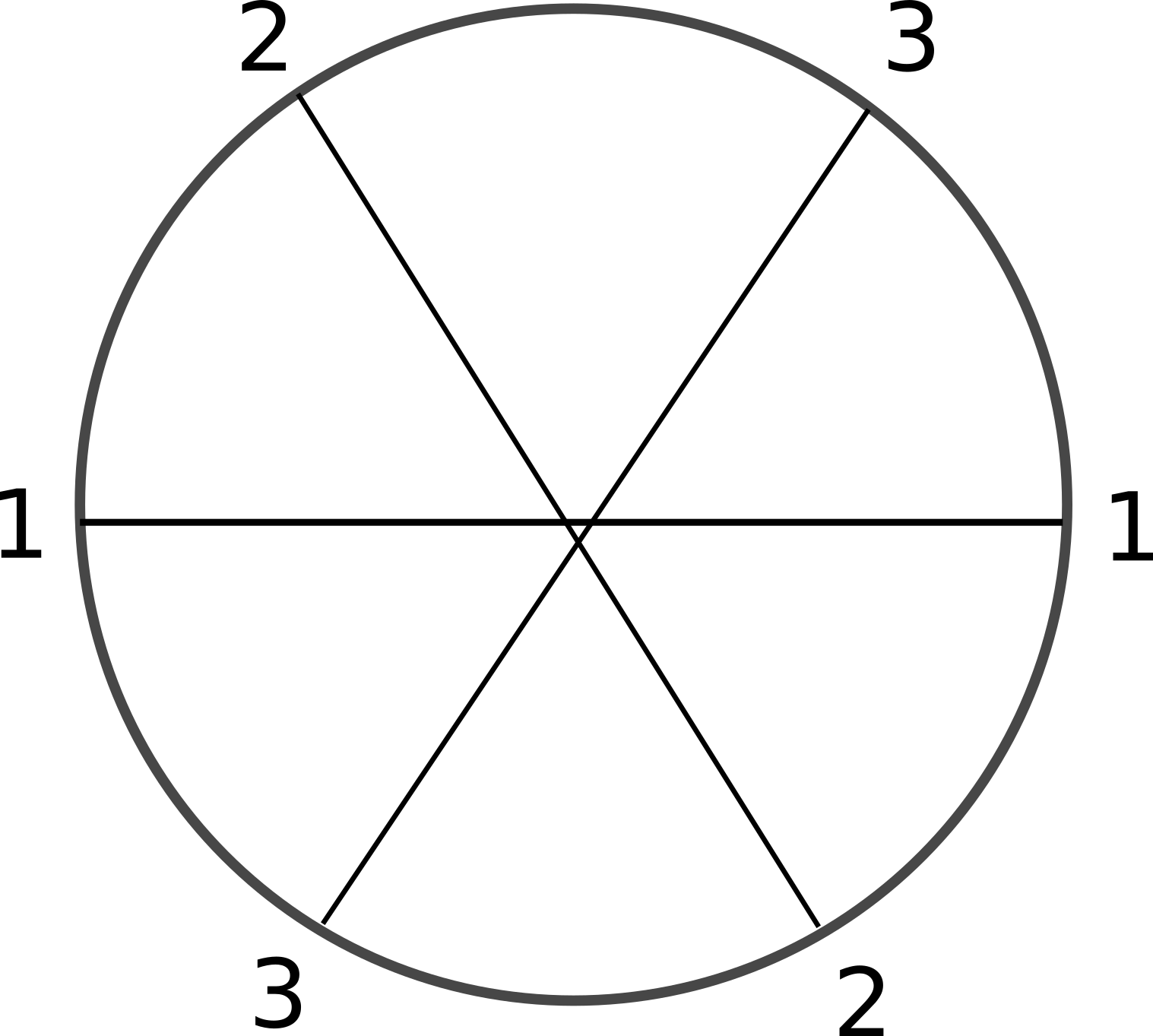}
\hspace*{3mm}
\includegraphics[height=25mm]{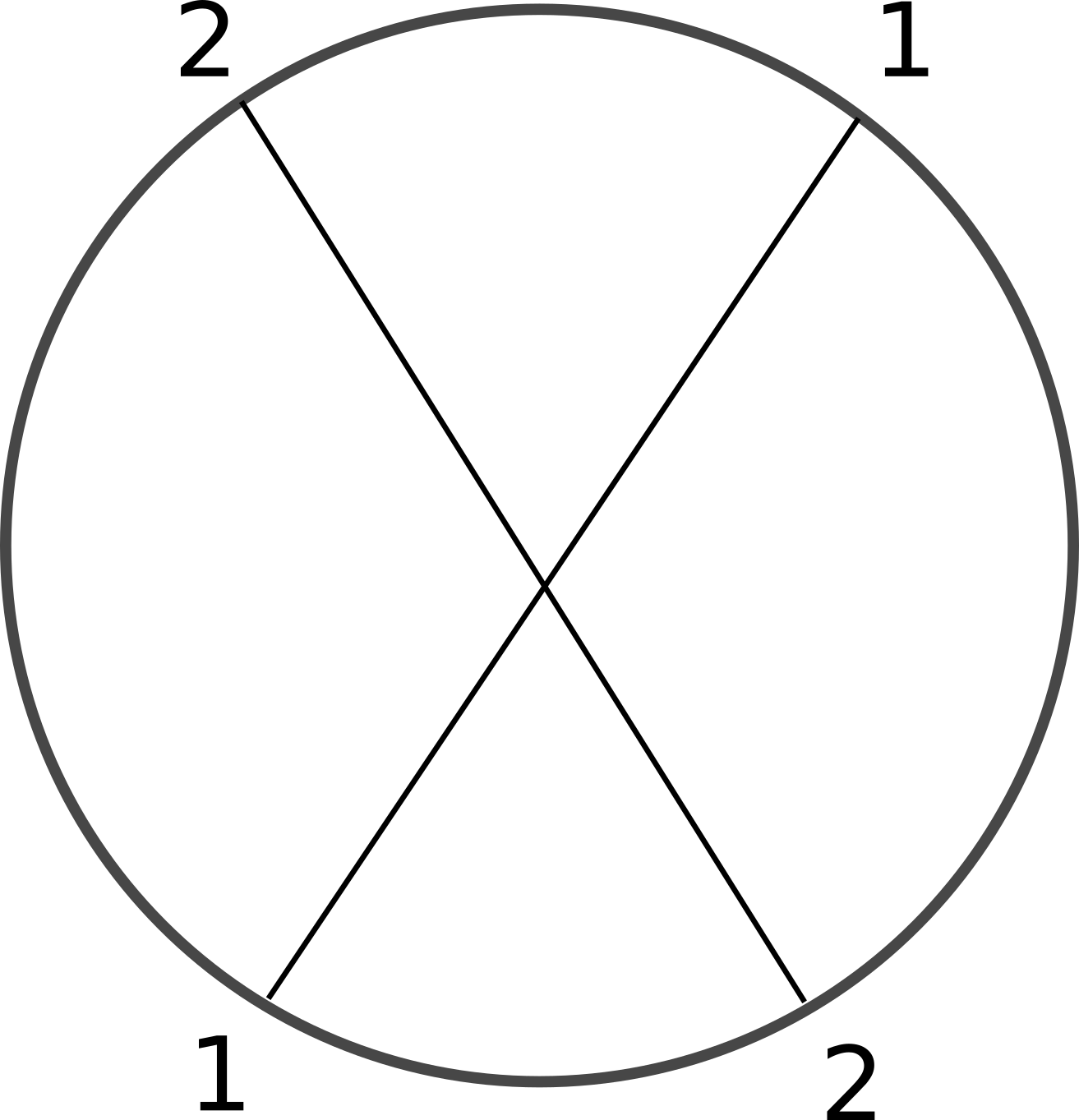}
\caption{Gauss diagrams of the classical (left) and virtual (right) knots in Fig.~\ref{fig:Gauss_codes}.
All crossings are pairwisely linked in both Gauss diagrams by Definition~\ref{dfn:arcdegree}. 
}
\label{fig:trefgauss}
\end{center}
\end{figure}

\begin{dfn}[parity of crossings, degrees $(\alpha,\beta)$ of sub-arcs]
\label{dfn:arcdegree}
Let $D$ be a torus diagram of a knot in a thickened torus $T^2\times I$.
\smallskip

Considering the Gauss diagram $G$ arising from the Gauss code $W$ of $D$, crossings $i,j$ are called \emph{linked} if their corresponding chords $i,j$ intersect in the Gauss diagram, see Fig.~\ref{fig:trefgauss}.
\smallskip

The \emph{parity} function $f$ \cite{Manturovparity} is defined for a crossing $i$ as $f(i) = 0$ if the crossing is linked with an even number of other crossings and $f(i) = 1$ if $i$ is linked with an odd number of crossings.
\smallskip

The $i$-th \emph{arc} $a_i$ of $D$ is a curve beginning at an undercrossing labelled $i$ and ending at some other undercrossing $j$.
Intersections of an arc $a_i$ with the edges of $D$ may divide $a_i$ into several \emph{sub-arcs} ordered according to the orientation of the arc $a_i$.
\smallskip

The \emph{homology degree} $(\alpha, \beta) \in \Z^2$ of the first sub-arc of $a_i$ is defined as $(0,0)$. 
The values $\alpha$ and $\beta$ for every next sub-arc of $a_i$ is increased by $1$ from the previous sub-arc when crossing (respectively) the vertical or horizontal edges in the positive direction as defined in Fig.~\ref{fig:signs_hv}, and decreased by $1$ when crossing these edges in the negative direction.
We label each sub-arc of $a_i$ with its homology degree as a suffix: $a_i(\alpha, \beta)$
\bs
\end{dfn}

As an example we consider the single component textile with code $\{h_1^+12^+31^-v_1^+3^+2\}$ in Fig.~\ref{fig:zenkexample}. 
We show this textile with crossings and sub-arcs labelled by homology degrees.
\smallskip

An arc can be read directly  as oriented fragments of the code running cyclically from its numbered undercrossing to the next undercrossing.
The arc $a_1$ given by the substring $1^-v_1^+3^+$ starts from undercrossing $1^-$ and consists of sub-arc $a_1(0,0)$ followed by sub-arc $a_1(1,0)$ finishing at undercrossing $3^+$.
The arc $a_2$ is given by the substring $3^+2h_1^+12^+$. 
The presence of the symbol $h_1^+$ divides $a_2$ into two sub-arcs given by the substrings $3^+2h_1^+$ and $h_1^+12^+$ with homology degrees $(0,0)$, $(0,1)$ respectively. 
\smallskip

Fig.~\ref{fig:zenkexample} also shows the Gauss diagram of this textile - note that crossing $1$ is even, while the other two crossings are odd. 

\begin{figure}[htb]
\begin{center}
\includegraphics[height=39mm]{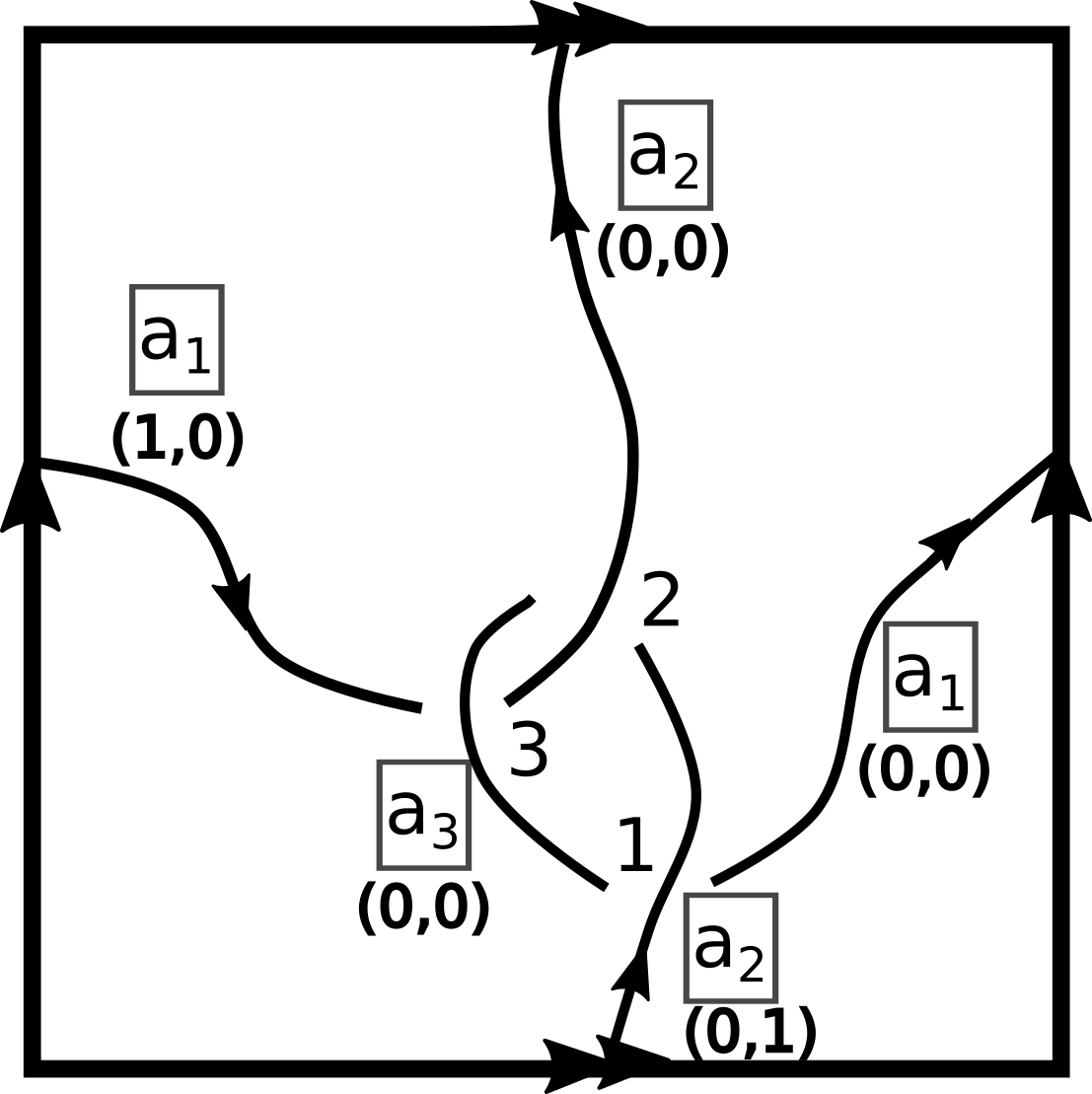}
\hspace*{2mm}
\includegraphics[height=39mm]{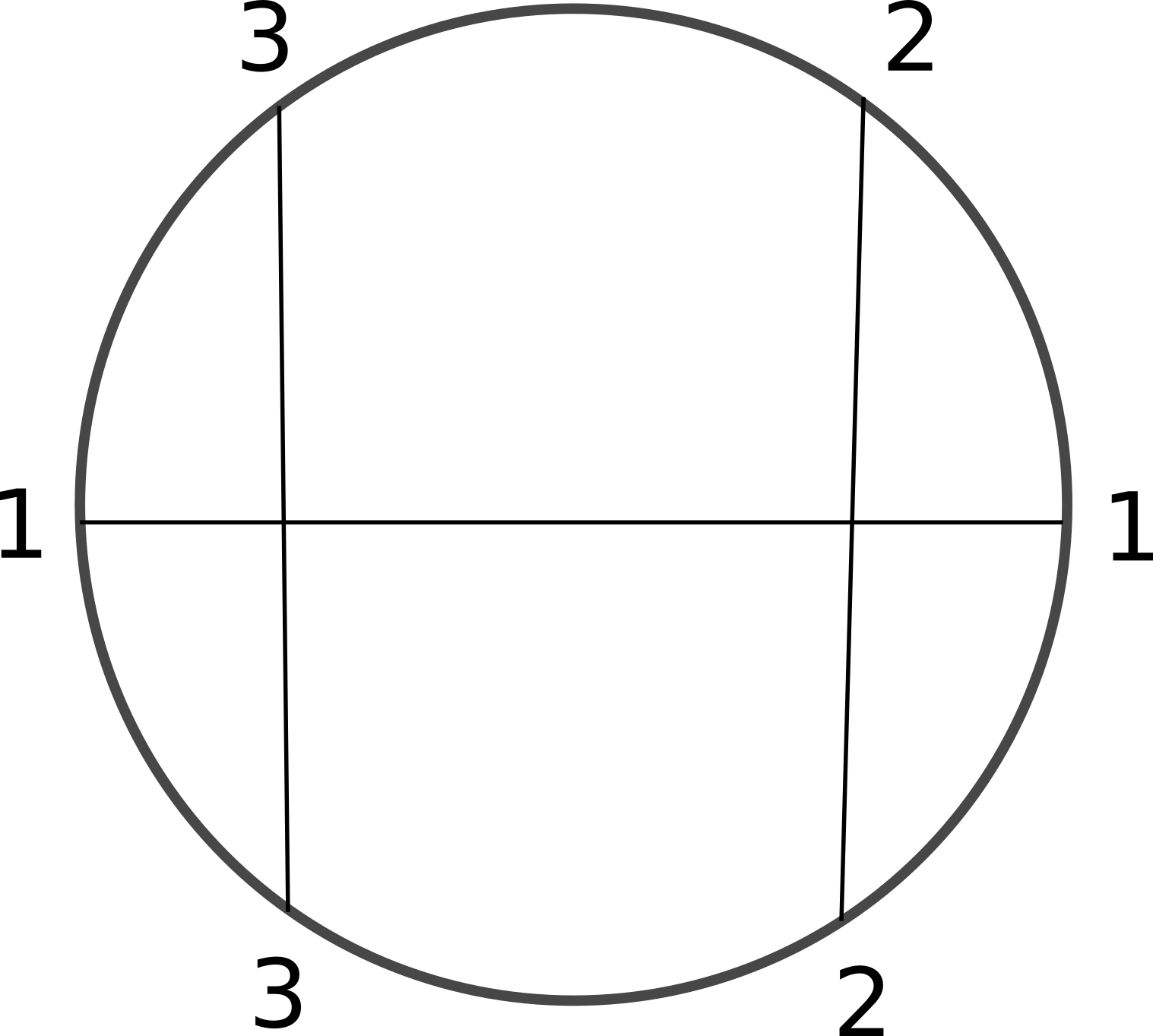}
\caption{\emph{Left}: The knot with code $\{h_1^+12^+31^-v_1^+3^+2\}$. All sub-arcs are labelled with homology degrees $(\alpha,\beta)$ from Definition~\ref{dfn:arcdegree}.
\emph{Right} : the Gauss diagram showing crossings $1,2,3$ by chords along a circle parameterizing the knot.}
\label{fig:zenkexample}
\end{center}
\end{figure}

\begin{dfn}[Zenkina polynomial]
\label{dfn:zenkpoly}
For a crossing $i$ and an arc $a_j$, possibly divided into sub-arcs, the \emph{incidence factor} is
\[
[i:a_j] :=  \epsilon_1z_1x^{\alpha_1} y^{\beta_1} + \epsilon_2z_2x^{\alpha_2} y^{\beta_2} + \epsilon_3z_3x^{\alpha_3} y^{\beta_3}
\]
where
$\epsilon_1$ is equal to $1$ if the arc $a_j$ leaves the crossing $i$ and $0$ otherwise; $\epsilon_2$ is equal to $1$ if the arc $a_j$ goes through the crossing (which should be an overcrossing) and $0$ otherwise; $\epsilon_3$ is equal to $1$ if the arc $a_j$ terminates at the crossing $i$ and $0$ otherwise. 
\smallskip

The values of exponents $\alpha_k, \beta_k$ above are given by the homology degree of the sub-arc of arc $j$ which leaves, passes through or arrives at the crossing $i$, respectively for $k=1,2,3$.
\smallskip

The values of the coefficient $z_2$ are given by (see Fig.~\ref{fig:zenkparity})
\[
z_2 = 
\begin{cases}
1-t & \text{if the parity } f(i) = 0\\
q & \text{if the parity } f(i) = 1.
\end{cases}
\]

If the sign of the crossing $i$ is positive, then 
\begin{align*}
z_1  & = -1\\
z_3 & = 
\begin{cases}
t & \text{if the parity } f(i) = 0\\
p & \text{if the parity } f(i) = 1.
\end{cases}
\end{align*}

If the sign of $i$ is negative, the values of $z_1,z_3$ are exchanged. 
The matrix $A(D)$ of a diagram $D$ consists of the entries
\[
A_{ij} = [i:a_j]
\]
The \emph{Zenkina polynomial} is the determinant $|A(D)|$ considered in the ring
$
\Z[p,q,t,x,y]/\langle q^2 - (1-t)(1-p), qp-qt\rangle
$
\bs
\end{dfn}

The two polynomial relationships above ensure the invariance under the third Reidemeister move.
The Zenkina polynomial is proved in~\cite{zenkina2016invariant} to be invariant under ambient isotopy in $T^2\times I$ up to any monomial factor $\pm p^\alpha q^\beta t^\gamma$ with $\alpha, \beta, \gamma \in \Z$.
\smallskip

\begin{figure}[H]
\centering
\includegraphics[height = 40mm]{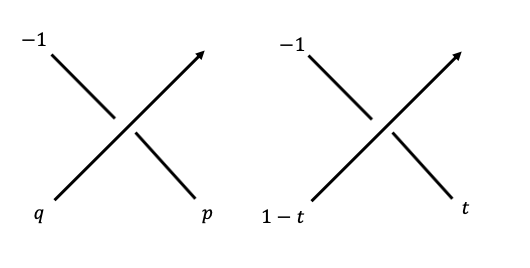}
\caption{Values of $z_i$ in an odd (left) and even (right) crossing} 
\label{fig:zenkparity}
\end{figure}

Continuing our calculation of the Zenkina polynomial for the textile of Fig.~\ref{fig:zenkexample}, we find the incidence factors of crossing $2$
\begin{align*}
[2, a_1] &= 0 \\
[2, a_2] &= q+py \\
[2, a_3] &= -1
\end{align*}

Again, we may calculate, for example, $[2, a_3]$ directly from the code, noting, for example that the substring $3^+2h_1^+12^+$ representing arc $a_3$ contains the overcrossing symbol $2$ in its sub-arc of homology degree $(0,0)$ and terminates with the symbol $2^+$.
\smallskip

The full matrix from Definition~\ref{dfn:zenkpoly} for the textile in Fig.~\ref{fig:zenkexample} is
\[
A(D) = \begin{pmatrix} t & y(1-t) & -1\\ 0 & q+py & -1 \\ px & -1 & q\end{pmatrix},
\]
The determinant of this polynomial is calculated directly as:
\[
(p^2-p+pt)xy +pqx + pqty +q^2t - t
\]
from which we may calculate via appropriate substitution of equivalent polynomials and division by $t$ the reduced form of the Zenkina polynomial
\[
(q^2-p^2)xy - pqy - qx + (1-q^2).
\]

The example above demonstrates that the Zenkina polynomial can be computed directly from any realizable textile code.

\begin{thm}
\label{thm:completelist}
The final column of Table~\ref{tab:redcodes} gives a complete classification of oriented proper single component textiles up to complexity 5 modulo isotopies in a thickened torus $T^2\times I$. 
\end{thm}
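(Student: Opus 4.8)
The plan is to pin down the classification by a finite computation, bounded on one side by an \emph{exhaustiveness} statement (every relevant textile is represented by a reduced code in Table~\ref{tab:redcodes}) and on the other by a \emph{distinctness} statement (the textiles realizing those codes are pairwise non-isotopic in $T^2\times I$). The second of these is the substantive part.

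For exhaustiveness, first observe that a proper single-component textile cannot have a torus diagram with at most one crossing: a single crossing of a one-component diagram is a self-crossing, removable by Reidemeister move~I, and a crossingless diagram is an embedded curve in $T^2$ whose consecutive sheets in $\R^2\times I$ are separated by a product strip (Fig.~\ref{fig:improper_textiles}), so such a textile is improper, and properness is preserved by isotopy. Hence, by Definition~\ref{dfn:complexity}, a proper single-component textile of complexity at most $5$ has a code whose triple $(n,l,m)$ of (crossings, horizontal points, vertical points) satisfies $n\ge 2$ and $n+l+m\le 5$, and $l,m\ge 1$ because the component must cross both the meridian and the longitude; up to the symmetry exchanging the two, the only possibilities are $(2,1,1)$, $(2,2,1)$, $(3,1,1)$ — exactly the three rows of Table~\ref{tab:allcodes} with $n\ge 2$. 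For each triple there are finitely many abstract textile codes up to cyclic permutation and crossing relabelling, enumerated in Table~\ref{tab:allcodes}; Theorem~\ref{thm:criterion} together with Algorithm~\ref{alg:periodB} retains exactly the realizable ones. Quotienting further by the operations that visibly preserve the isotopy class of the realized link — the $S_n$-action on crossing labels, the choice of cyclic starting symbol, and the code-level patterns $\ldots ii^\pm\ldots$ of move~I and $\ldots ij\ldots i^+j^-\ldots$ of move~II in Fig.~\ref{fig:reidcodes} — produces the "reduced textile codes" column of Table~\ref{tab:redcodes}. In particular all eight realizable complexity-$3$ codes disappear, so no new proper single-component textile occurs below complexity $4$.

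Next I would collapse the reduced list to genuine isotopy classes by exhibiting explicit ambient isotopies of $T^2\times I$ and checking that no further identifications are possible. Each of the $48$ reduced complexity-$5$ codes with two crossings is a complexity-$4$ diagram with an added meridional or longitudinal Dehn twist, visible as an extra $v_i^\pm$ or $h_i^\pm$ symbol (Fig.~\ref{fig:dehntwist}); a Dehn twist is \emph{not} an isotopy of the fixed $T^2\times I$, so these links stay genuinely of complexity $5$, but a planar translation of the diagram (Fig.~\ref{fig:dehniso}) identifies each with two others, cutting $48$ down to $16$. Of the $32$ reduced complexity-$5$ codes with three crossings, a pair of Reidemeister moves (Fig.~\ref{fig:3to2}) carries $16$ of them to two-crossing textiles already in the list, while a "rotation" isotopy of the embedded knot (Fig.~\ref{fig:3torinv}) pairs up the remaining $16$, leaving $8$; together with the $8$ complexity-$4$ classes this yields the $8$, $16$, $8$ entries in the final column.

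The distinctness part — that these $32$ textiles are pairwise non-isotopic in $T^2\times I$, including across the three rows, and that none secretly has a smaller diagram — is the main obstacle. I would certify it with two isotopy invariants of links in $T^2\times I$: the homology class $(x,y)\in\Z^2$ from Definition~\ref{dfn:arcdegree}, which is trivially invariant, and the Zenkina polynomial of Definition~\ref{dfn:zenkpoly}, which by~\cite{zenkina2016invariant} is invariant under ambient isotopy in $T^2\times I$ up to a monomial factor $\pm p^\alpha q^\beta t^\gamma$; both are computable directly from a realizable textile code, as the worked example of Fig.~\ref{fig:zenkexample} shows. The plan is to compute the pair (homology class, reduced Zenkina polynomial) for all $32$ candidates and verify it takes $32$ distinct values, which forces no two to be isotopic. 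The delicacy is that the Zenkina polynomial is not known to be a complete invariant, so one cannot be certain in advance that these two invariants separate all pairs sharing a homology class; for any such pair I would either produce a finer invariant — for instance a parity-refined count, a writhe-type quantity, or the polynomial evaluated before the monomial reduction — or, conversely, exhibit an explicit isotopy showing the two coincide and correct the enumeration accordingly.
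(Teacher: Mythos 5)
Your proposal follows essentially the same route as the paper: exhaustiveness comes from the enumeration of realizable codes and their reduction by crossing relabelling, the Reidemeister-move patterns of Fig.~\ref{fig:reidcodes}, and the Dehn-twist/translation/rotation identifications of Section~\ref{sec:reductions}, while distinctness is certified by computing the Zenkina polynomial (together with the homology class encoded in the knot symbols) for each of the $8+16+8$ reduced codes and checking the values are pairwise distinct, which is exactly the content of Tables~\ref{tab:zenkploy22}, \ref{tab:zenkploy2} and~\ref{tab:zenkploy3}. Your contingency plan for invariant coincidences is a sensible addition but goes beyond the paper's argument, which simply asserts that all tabulated polynomials differ.
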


\begin{proof}
Tables~\ref{tab:zenkploy22},~\ref{tab:zenkploy2} and~\ref{tab:zenkploy3} list the Zenkina polynomials for the oriented knots in a thickened torus $T^2\times I$ arising from all of the reduced textile codes enumerated in column 5 of Table~\ref{tab:redcodes}.
\smallskip

All these polynomials are distinct, and therefore arise from knots that are not isotopic to each other. 
Only for brevity, Figs.~\ref{fig:211named}, \ref{fig:221named} and~\ref{fig:311named} show the unoriented versions of these knots labelled with symbols for each distinct orientation of the diagram under the labelling from the beginning of section~\ref{sec:classification}.
\end{proof}

\begin{figure*}[!t]
\centering
\includegraphics[width=75mm]{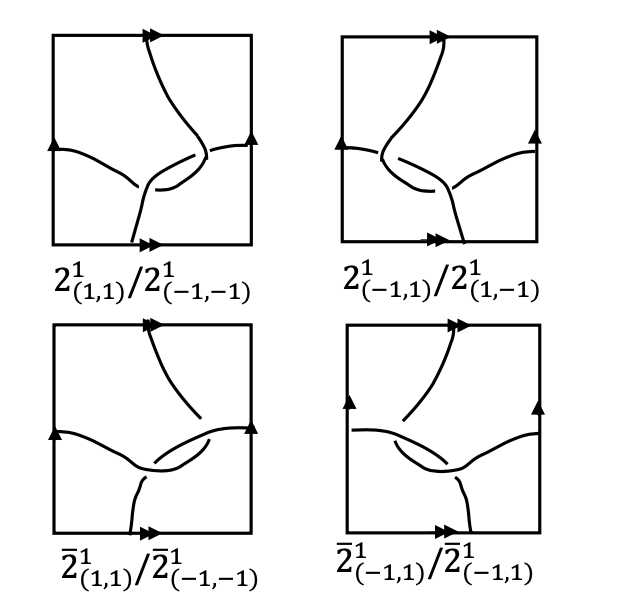}
\caption{Unoriented diagrams and knot symbols represented by all codes of complexity $4$ with $2$ crossings}  
\label{fig:211named}
\end{figure*}

\begin{figure*}[!t]
\centering
\includegraphics[width=100mm]{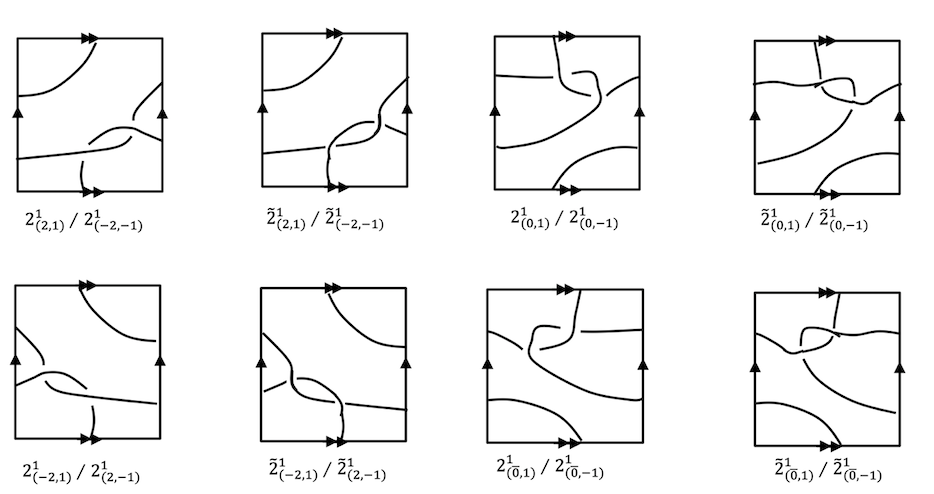}
\caption{Unoriented diagrams and knot symbols represented by all codes of complexity $5$ with $2$ crossings}  
\label{fig:221named}
\end{figure*}

\begin{figure*}[!t]
\centering
\includegraphics[width=75mm]{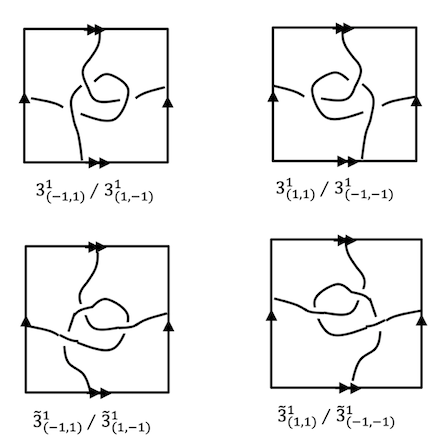}
\caption{Unoriented diagrams and knot symbols represented by all codes of complexity $5$ with $3$ crossings}  
\label{fig:311named}
\end{figure*}

\begin{table*}
\centering
\begin{tabular}{ |c|c|c| }
\hline
Paragraph & Knot symbol & Zenkina polynomial \\
\hline
$h_1^+1^+2v_1^+12^+$ & $2^1_{(1,1)}$ & $p^2xy + pqy + qx - 1$\\ 
\hline
$h_1^+12^+v_1^-1^+2$ & $2^1_{-1,1}$ & $p^2x^{-1}y + pqy + qx^{-1} - 1$ \\
\hline
$h_1^- 12^+v_1^+1^+2$ & $2^1_{1,-1}$ & $p^2xy^{-1} + pqy^{-1} + qx -1 $ \\
\hline
$h_1^-1^+2v_1^-12^+$ & $\tilde{2}^1_{(-1,-1)}$ & $p^2x^{-1}y^{-1}  + pqy^{-1}  + qx^{-1} - 1 $ \\
\hline
$h_1^+12^-v_1^+1^-2$ & $\tilde{2}^1_{(1,1)}$ & $p^2+ pqy + qx - xy$ \\
\hline
$h_1^+1^-2v_1^-12^-$ & $\tilde{2}^1_{(-1, 1}$ & $p^2 + pqy + qx - x^{-1}y$ \\
\hline
$h_1^-1^-2v_1^+12^-$ & $\tilde{2}^1_{(1,-1)}$ &  $p^2 + pqy^{-1} + qx - xy^{-1}$ \\
\hline
$h_1^-12^-v_1^-1^-2$ & $2^1_{(-1, -1)}$ & $ p^2 + pqy^{-1} + qx^{-1} - x^{-1}y^{-1}$ \\
\hline
\end{tabular}
\caption{Zenkina polynomials for knots in a thickened torus $T^2\times I$ of complexity $4$ with $2$ crossings}
\label{tab:zenkploy22}
\end{table*}

\begin{table*}
\centering
\begin{tabular}{ |c|c|c| }
\hline
Paragraph & Knot symbol & Zenkina polynomial \\
\hline
$h_1^+1+2v_1^+12+v_2^+$ & $2^1_{(2,1)}$ & $p^2x^2y + pxy + q(xy-x) - 1$ \\
\hline
$h_1^-v_2^-1+2v_1^-12+$ & $2^1_{(-2,-1)}$ & $p^2x^{-2}y + px^{-1}y^{-1} + q(x^{-1}y^{-1} - x) - 1$ \\
\hline
$h_1^-v_2^+1^-2v_1^+12^-$ & $2^1_{(2,-1)}$ & $p^2 - pqx - qxy^{-1} + x^2y^{-1}$ \\
\hline
$h_1^+1^-2v_1^-12^-v_2^-$ & $2^1_{-2,1)}$ & $p^2 - pqx^{-1} - qx^{-1}y + x^{-2}y$ \\
\hline
\hline
$h_1^+v_1^+12^+v_2^-1^+$ & $2^1_{(0,1)}$ & $p^2y + qxy + pqx^{-1} - 1$ \\
\hline
$h_1^-12^+v_2+1^+2v_1^-$ & $2^1_{(0,-1)}$ & $p^2y^{-1} - qxy - pqx ^{-1}  -y$ \\
\hline
$h_1^+v_1^-12^-v_2^+1^-2$ & $2^1_{(\bar{0},1)}$ & $p^2 + pqx^{-1}y + qx - y$\\
\hline
$h_1^-12^-v_2^-1^-2v_1^+$ & $2^1_{(\bar{0},-1)}$ & $p^2 + pqxy^{-1} + qx^{-1} - y^{-1}$ \\
\hline
\hline
$h_1^+12^-v_1^+1^-2v_2^+$ & $\tilde{2}^1_{(2,1)}$ & $p^2 - pqxy - qx ^{-1}  -x^2y$ \\
\hline
$h_1^-v_2^-12^-v1^-1^-2$ & $\tilde{2}^1_{(-2,-1)}$ & $p^2 - pqx - qxy^{-1} + x^{-2}y^{-1}$ \\
\hline
$h_1^-v_2^+12^+v_1^+1^+2$ & $\tilde{2}^1_{(2,-1)}$ & $p^2x^2y^{-1} + pqx + qxy^{-1} - 1$ \\
\hline
$h_1^+12^+v_1^-1^+2v_2^-$ & $\tilde{2}^1_{-2,1)}$ & $p^2x^{-2}y + px^{-1} + q(x^{-1}y - x^{-1}) - 1$ \\
\hline
\hline
$h_1^+v_1^+1^-2v_2^-12^-$ & $\tilde{2}^1_{(0,1)}$ & $p^2 + pqx^{-1}y + qx - y$ \\
\hline
$h_1^-1^-2v_2^+12^-v_1^-$ & $\tilde{2}^1_{(0,-1)}$ & $p^2 + pqx +qx^{-1}y^{-1} -y^{-1}$\\
\hline
$h_1^+v_1^-1^+2v_2^+12^+$ & $\tilde{2}^1_{(\bar{0},1)}$ & $p^2y + pqxy^{-1} +qx -1$ \\
\hline
$h_1^-1^+2v_2^-12^+v_1^+$ & $2^1_{(\bar{0},-1)}$ & $p^2y^{-1} + qtxy^{-1} + qx^{-1} - 1$ \\
\hline
\end{tabular}
\caption{Zenkina polynomials for knots in a thickened torus $T^2\times I$ of complexity $5$ with $2$ crossings}
\label{tab:zenkploy2}
\end{table*}

\begin{table*}
\centering
\begin{tabular}{ |c|c|c| }
\hline
Code & Knot symbol & Zenkina polynomial \\
\hline
$h_1^+12^-31^+v_1^-3^-2$ & $3^1_{(-1,1)}$ & $(1-q^2)x^{-1}y - pqx^{-1} - qy - (q^2 - p^2)$\\
\hline
$h_1^-12^-v_1^+3^+21^-3$ & $3^1_{(1,-1)}$ & $(1-q^2)x^{-1}y^{-1} - pqy^{-1} - qx^{-1} + (q^2-p^2)$\\
\hline
$h_1^+12^+31^-v_1^+3^+2$ & $3^1_{(1,1)}$ & $(q^2 - p^2)xy - pqy - qx + (1-q^2)$ \\
\hline
$h_1^-12^+v_1^-3^-21^+3$ & $3^1_{(-1,-1)}$ & $(q^2 - p^2)x^{-1}y^{-1} - pqy^{-1} - qx^{-1} + (1-q^2)$\\
\hline
\hline
$h_1^+1^-23^+1v_1^-32^+$ & $\tilde{3}^1_{(-1,1)}$ & $(q^2 - p^2)x^{-1}y - pqx^{-1} - qy + (1-q^2)$\\
\hline
$h_1^-1^+2v_1^+32^+13^-$ & $\tilde{3}^1_{(1,-1)}$ & $(q^2 - p^2)xy^{-1} - pqy^{-1} - qx + (1-q^2)$\\
\hline
$h_1^+1^+23^-1v_1^+32^-$ & $\tilde{3}^1_{(1,1)}$ & $(1-q^2)xy - pqy - qx - (q^2 - p^2)$\\
\hline
$h_1^-1^-2v_1^-32^-13^+$ & $\tilde{3}^1_{(-1,-1)}$ & $(1-q^2)x^{-1}y^{-1} - qty^{-1} - qx^{-1} + (q^2-p^2)$\\
\hline
\end{tabular}
\caption{Zenkina polynomials for knots in a thickened torus $T^2\times I$ of complexity $5$ with 3 crossings}
\label{tab:zenkploy3}
\end{table*}

The calculation of the Zenkina polynomial and other invariants directly from a textile code is in progress. 
Exploring the properties of these invariants may reveal more effective approaches to realizability of codes and classifications of textiles of greater complexity and with multiple components. 

\section{Conclusions and a discussion of future work}
\label{sec:discussion}

Here are the key contributions to the shape modelling and topological classifications of periodic textile structures.
\smallskip

\noindent
$\bullet$
Definition~\ref{dfn:textile_code} introduces a 1-dimensional string to encode any textile structure, which allows a simple reconstruction. 
\smallskip

\noindent
$\bullet$
Theorem~\ref{thm:algorithm} justifies a linear time algorithm to detect realizability of any textile code by an actual textile structure.
\smallskip

\noindent
$\bullet$
The systematic approach to the enumeration of textile structures by new codes has led to the isotopic classification of all oriented knots in $T^2\times I$ up to complexity 5 by Theorem~\ref{thm:completelist}.
\smallskip

The implementation of Reidemeister moves directly from a textile code described in Fig.~\ref{fig:reidcodes} is not exhaustive - there may other more complex reductions that can be similarly handled. 
\smallskip

Determination of improper textiles in the sense of Definition~\ref{dfn:textile} is  manual - we can explore ways to determine whether a link is proper or otherwise directly from the code. 
\smallskip

Another direction is to characterize fundamental groups of 2-periodic link complements in $T^2\times I$ similarly to classical links \cite{kurlin2007peripherally}.
This work was supported by the UK Engineering and Physical Sciences Research Council under grant  EP/R018472/1.
We thank all reviewers for helpful suggestions.


\bibliographystyle{plain}
\bibliography{realizability_textiles}

\end{document}